\tikzset{
    labl/.style={anchor=south, rotate=90, inner sep=.5mm}
}
\newtheorem{thm}{Theorem}
\newtheorem{thmintro}{Theorem}
\newtheorem{prop}[thm]{Proposition}
\newtheorem{lem}[thm]{Lemma}
\newtheorem{cor}[thm]{Corollary}
\theoremstyle{definition}
\newtheorem{defi}[thm]{Definition}
\newtheorem{rem}[thm]{Remark}
\newtheorem{remark}[thm]{Remark}
\numberwithin{equation}{section}
\numberwithin{thm}{section}
\def\ge{\geqslant}
\def\le{\leqslant}
\def\i{^{-1}}
\def\<{\langle}
\def\>{\rangle}
\newcommand{{\BG}}{\ensuremath{\mathbb {G}}\xspace}
\newcommand{{\BK}}{\ensuremath{\mathbb {K}}\xspace}
\newcommand{\BR}{\ensuremath{\mathbb {R}}\xspace}
\newcommand{\CB}{\ensuremath{\mathcal {B}}\xspace}
\newcommand{\CP}{\ensuremath{\mathcal {P}}\xspace}
\begin{document}

\title[]{Acyclic matchings on Bruhat intervals and applications to total positivity}

\author[Huanchen Bao]{Huanchen Bao}
\address{Department of Mathematics, National University of Singapore, Singapore.}
\email{huanchen@nus.edu.sg}

\author[Xuhua He]{Xuhua He}
\address{Department of Mathematics and New Cornerstone Science Laboratory, The University of Hong Kong, Pokfulam, Hong Kong, Hong Kong SAR, China}
\email{xuhuahe@hku.hk}

\keywords{Weyl groups, Bruhat intervals, total positivity}
\subjclass[2020]{20F55, 14M15, 15B48} 

\begin{abstract}
The existence of acyclic complete matchings on the face poset of a regular CW complex implies that the underlying topological space of the CW complex is contractible by discrete Morse theory. 

In this paper, we construct explicitly acyclic complete matchings on any non-trivial Bruhat interval $[v,w]$ based on any reflection order on the Coxeter group $W$. We then apply this combinatorial result to regular CW complexes arising from the theory of total positivity. 

As an application, we show that the totally nonnegative Springer fibers are contractible. This verifies a conjecture of Lusztig in \cite{Lu94}. As another application, we show that the totally nonnegative fibers of the natural projection from full flag varieties to partial flag varieties are contractible. This leads to a much simplified proof of the regularity property on totally nonnegative partial flag varieties compared to the proofs in \cite{GKL} and \cite{BH22}. 
\end{abstract}

	\maketitle
	
	\tableofcontents
\section{Introduction}

    \subsection{Acyclic complete matchings}Discrete Morse theory, developed by Forman in \cite{Forman}, is an efficient tool for determining the homotopy type of a regular CW complex. The theory was reformulated by Chari \cite{Chari} in purely combinatorial terms of acyclic matchings on the face poset $P$ of the regular CW complex, and was later applied by Rietsch and Williams \cite{RW} to establish the regularity of totally nonnegative flag varieties up to homotopy equivalence. Chari's reformulation applies to arbitrary posets; see \S\ref{sec:discrete} for the definition.

    The poset we are mainly interested in arises from the Coxeter groups. We state the main combinatorial result of this paper. 
    
   \begin{thmintro}[Theorem~\ref{thm:M}]
    A reflection order on a Coxeter group $W$ induces an acyclic complete matching on any non-trivial Bruhat interval $[v, w]$ for $v<w$ in $W$. 
   \end{thmintro}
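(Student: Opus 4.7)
My plan is to build the matching explicitly from the reflection order $\prec$ and then establish, in order, non-emptiness, the matching property, and acyclicity. For each $u \in [v,w]$, define
\[
\mathcal{N}(u) := \{ t \in T : ut \in [v,w] \text{ and } |\ell(ut) - \ell(u)| = 1\},
\]
the set of reflections producing a Hasse-diagram neighbor of $u$ inside $[v,w]$. I would set $M(u) := \min_{\prec} \mathcal{N}(u)$ and propose to pair $u$ with $u \cdot M(u)$. Non-emptiness of $\mathcal{N}(u)$ is immediate from the chain property of Bruhat order: because $v < w$, for each $u \in [v,w]$ we have $u < w$ or $u > v$, and in either case there is a cover in $[v,w]$ above or below $u$.

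The main technical step is the matching property $M(u \cdot M(u)) = M(u)$, which is what ensures $u \mapsto u \cdot M(u)$ is an involution and that the pairs $\{u, u \cdot M(u)\}$ genuinely partition $[v,w]$. Set $u' := u \cdot M(u)$ and suppose, for contradiction, some $r \prec M(u)$ lies in $\mathcal{N}(u')$. Then $u'r$ is a Bruhat neighbor of $u'$ in $[v,w]$ distinct from $u$. I would then invoke the diamond property of Bruhat order (every rank-$2$ interval is a four-element diamond) to produce the fourth corner $x$ of the rank-$2$ interval containing $u$, $u'$, $u'r$. The four reflections labeling the edges of this diamond lie in a single dihedral reflection subgroup $W' \subseteq W$, and the convexity property of reflection orders forces the reflections of $W'$ to occupy a contiguous block in $\prec$. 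A careful comparison in this block should produce a reflection $\prec M(u)$ lying in $\mathcal{N}(u)$, contradicting the definition of $M(u)$.

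For acyclicity, I would work with the Morse graph associated to the matching (non-matched Hasse edges oriented upward, matched edges downward), assume for contradiction a directed cycle $u_0 \to u_1 \to \cdots \to u_n = u_0$, and consider the smallest reflection $t^\star$ appearing on any matched edge of the cycle. The minimality of $t^\star$ at both endpoints of that matched edge, combined with a local analysis using the diamond property, should force a contradiction with the choice of $M$ further along the cycle.

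The hard part will be the matching property. The subtlety is that the ``fourth corner'' $x$ extracted from the diamond need not automatically satisfy $x \ge v$, so $x$ can fall outside $[v,w]$ and the naive contradiction collapses. Overcoming this will likely require combining the lifting property of Bruhat order and the subword characterization to transport the offending reflection back into $\mathcal{N}(u)$, or, alternatively, an induction on $\ell(w) - \ell(v)$ in which the inductive hypothesis is strong enough to control the boundary behavior simultaneously at $v$ and $w$. Once the matching is set up correctly, acyclicity reduces to a standard argument for matchings produced by lex-minimal selections, in the spirit of the EL-shelling matchings of Dyer on Bruhat intervals.
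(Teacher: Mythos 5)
Your matching construction is essentially the same as the paper's (the paper labels $w_1\lessdot w_2$ by $w_1w_2^{-1}$ and picks the $\preceq$-maximal incident edge at each vertex; your right-multiplication labeling and $\min_\prec$ convention amount to the same thing after replacing $\preceq$ by its opposite, which is again a reflection order). So the object you define is correct, and the real question is whether your proof of the involution and acyclicity properties closes.

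On the matching (involution) property, you have correctly identified the genuine obstruction: when the matched edge at $u$ goes up (say $u\lessdot u'$) and the competing smaller-labeled edge at $u'$ goes down to some $u'r\neq u$, the ``fourth corner'' of the rank-$2$ diamond lies \emph{below} $u$ and need not be $\geq v$, so the diamond is not contained in $[v,w]$ and the local convexity argument cannot be applied. Your suggested repairs (lifting/subword transport, or a stronger induction on $\ell(w)-\ell(v)$) are not what the paper does, and I do not see how to make them work cleanly. The paper's resolution of exactly this case is to abandon the local rank-$2$ picture and argue globally: take the unique increasing maximal chain of $[v,x]$ (here $x=u$), note that appending the edge $x\lessdot M(x)$ keeps it increasing because $\lambda(x\lessdot M(x))$ is by construction the largest label at $x$, so one obtains the unique increasing maximal chain of the larger interval $[v,M(x)]$. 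Dyer's dual EL-shellability then says the top edge label of this chain dominates every coatom label of $M(x)$ inside $[v,M(x)]$; in particular $\lambda(M^2(x)\lessdot M(x))\preceq\lambda(x\lessdot M(x))$, which contradicts $M^2(x)\neq x$. The ``up-up'' case does reduce to a rank-$2$ interval, and there the diamond is safely inside $[v,w]$, just as you noted. So the missing idea is: in the bad direction, use the \emph{global} unique increasing chain and the dual EL property, not the diamond.

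On acyclicity, your sketch (take the $\prec$-smallest reflection on a matched edge of a putative cycle and derive a contradiction from minimality plus the diamond property) is plausible in spirit but is a different and unfinished route, and it faces the same risk of diamonds escaping $[v,w]$. The paper instead proves acyclicity by a shelling-type decomposition and induction on $\ell(w)-\ell(v)$: order the coatoms $w_1,\dots,w_n$ of $w$ by their labels, show (via the dual EL property) that each union $\bigcup_{i<k}[v,w_i]$ is closed under the matching, partition $[v,w]$ into these pieces plus the two-element set $\{M(w),w\}$, observe that directed edges between distinct pieces all point in a consistent direction, and conclude no cycle can cross pieces while the induction hypothesis kills cycles within a piece. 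You would need to supply something of comparable rigor to finish.

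In short: right construction, correctly flagged gap in the involution step, but the fix is a global EL/dual-EL argument rather than any of the patches you float; and the acyclicity proof still needs a complete argument, for which the paper's coatom-shelling induction is the model.
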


    It is known \cite{Bj1} that $[v,w]$ is the augmented face poset of a regular CW complex homeomorphic to a closed ball. So discrete Morse theory is not very useful on the poset $[v,w]$ itself. However, there are many regular CW complexes that are not homeomorphic to closed balls and are not even  equidimensional. The explicit matching from Theorem ~\ref{thm:M} will be a useful tool to handle some of these complicated situations.

    Let us mention some previous works in this direction. Chari \cite{Chari} showed that a shelling of a poset implies the existence of a desired matching. Dyer \cite{Dyer1} showed that a reflection order on $W$ implies the existence a shelling on Bruhat intervals. Combining the two results, Rietsch and Williams \cite{RW}*{Corollary~8.2} obtained that the nontrivial Bruhat interval $[v,w]$ admits an acyclic complete matching induced from any reflection order. It is important to point out that such matching were constructed in a recursive way. It was subsequently studied by Jones \cite{Jones}. 

    Our construction, on the other hand, is explicit and non-recursive. Such explicit construction has the advantage to handle some complicated posets, for example, certain subsets of the Bruhat intervals, which serve as the face posets of some interesting geometric spaces we will consider in this paper. We also show that the matching from our construction coincides with the one in \cite{RW}.

\subsection{Totally positive spaces} The regular CW complexes we consider in this paper arise from the theory of total positivity. 

The theory of total positivity on the reductive groups and their flag varieties was introduced by Lusztig in the seminal work \cite{Lu94}, and later generalized to arbitrary Kac-Moody types in \cite{Lu19} and \cite{BH21}. The totally nonnegative flag variety $\CP_{K, \ge 0}$ is a ``remarkable polyhedral subspace'' (cf. \cite{Lu94}). It admits a cellular decomposition and is a regular CW complex homeomorphic to a closed ball (see \cite{Rie06}, \cite{GKL}, \cite{BH22}). The totally nonnegative flag varieties have been used in many other areas, such as cluster algebras \cite{FZ}, Grassmann polytopes \cite{Lam15}, and the physics of scattering amplitudes \cite{AHBC16}. 


The regular CW complex structure for the totally nonnegative full flag variety is given by $\CB_{\ge 0}=\sqcup_{v \le w} \CB_{v, w; >0}$, where $\CB_{v, w; >0}$ is the totally nonnegative Richardson variety of $\CB$. In this paper, we will establish the contractibility of certain subcomplexes inside the totally nonnegative flag variety $\CB_{\ge 0}$. This will be established by constructing explicit acyclic matchings on certain subposets of $\{(v, w) \in W^2 \vert v \le w\}$. These matchings will be obtained via carefully chosen reflection orders on $W$ via Theorem ~\ref{thm:M}.

The first subcomplex of $\CB_{\ge 0}$ we study is the totally nonnnegative Springer fiber for a reductive group. By definition, a totally nonnnegative Springer fiber is $\CB^u_{\ge 0}=\{B \in \CB_{\ge 0} \vert u \cdot B=B\}$ for some unipotent $u \in G_{\ge 0}$.  Lusztig \cite{Lu94}*{\S8.16} conjectured that totally nonnegative Springer fibers are contractible. In \cite{LuSp}, Lusztig showed that $\CB^u_{\ge 0}$ admits a natural cellular decomposition, inherited from the cellular decomposition of $\CB_{\ge 0}$ and gave an explicit description of the cells inside $\CB^u_{\ge 0}$. As a subcomplex of $\CB_{\ge 0}$, one deduces that $\CB^u_{\ge 0}$ is a regular CW complex by the regularity theorem of Galashin-Karp-Lam \cite{GKL}. However, the subcomplex $\CB^u_{\ge 0}$ is not equidimensional in general.

\begin{thmintro}[Theorem~\ref{thm:Sp}]
    The totally nonnegative Springer fiber $\CB^u_{\ge 0}$ is contractible.
\end{thmintro}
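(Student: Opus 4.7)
The plan is to apply Chari's discrete Morse theoretic criterion described in the introduction: since $\CB^u_{\ge 0}$ inherits a regular CW structure from $\CB_{\ge 0}$, it suffices to produce an acyclic complete matching on its augmented face poset, equivalently an acyclic matching on the indexing set $\Sigma^u$ of its cells which leaves exactly one unmatched (critical) cell. The combinatorial engine for constructing this matching will be Theorem~\ref{thm:M}.

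I would begin by unpacking Lusztig's explicit cellular description of $\CB^u_{\ge 0}$ from \cite{LuSp}. Writing $u$ as an element of the totally positive piece of $U_{\ge 0}$ indexed by some $w_1 \in W$, the non-empty intersections $\CB^u_{\ge 0} \cap \CB_{v,w;>0}$ are parametrized by an explicit subset $\Sigma^{w_1} \subseteq \{(v,w) \in W^2 : v \le w\}$ cut out by Bruhat-theoretic conditions involving $w_1$. The structural observation I would aim to establish is that $\Sigma^{w_1}$ decomposes naturally as a union of Bruhat intervals, so that a reflection order on $W$ adapted to $w_1$ allows Theorem~\ref{thm:M} to be applied piecewise. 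After stratifying $\Sigma^{w_1}$ by one coordinate (say $w$) and matching the fibres using Theorem~\ref{thm:M}, a second layer of matching among the resulting fibrewise critical cells, again via Theorem~\ref{thm:M} applied to an auxiliary Bruhat interval, should reduce $\Sigma^{w_1}$ down to a single unmatched cell, which is then paired with $\hat 0$ in the augmented poset.

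The main obstacle will be global acyclicity. Each matching produced by Theorem~\ref{thm:M} is individually acyclic on its Bruhat interval, but gluing them across strata can a priori introduce long directed cycles in the modified Hasse diagram of $\Sigma^{w_1}$. I expect this to be controlled by choosing the reflection order coherently with $w_1$, for example by beginning the order with the inversions of $w_1$ and extending compatibly on the complement, so that every matching edge (within a stratum and between strata) respects a common linear extension of $\Sigma^{w_1}$. Once such a refined choice is in hand, acyclicity becomes automatic and Chari's reformulation of discrete Morse theory yields the contractibility of $\CB^u_{\ge 0}$, confirming Lusztig's conjecture \cite{Lu94}.
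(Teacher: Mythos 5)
Your high-level plan is the right one and matches the paper's strategy: reduce to Lusztig's explicit cell parametrization of $\CB^u_{\ge 0}$ from \cite{LuSp}, build a matching on the resulting subposet of $\{(v,w) : v \le w\}$ using Theorem~\ref{thm:M} with a reflection order adapted to $u$, and then invoke Chari's criterion (Theorem~\ref{thm:Chari}). However, the details diverge from what the paper actually does, and the divergences matter.

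First, the parametrization you recall is not quite Lusztig's. The paper records from \cite{LuSp} that $u$ determines two \emph{disjoint subsets} $J, J' \subset I$ (not a single element $w_1 \in W$), and the cells of $\CB^u_{\ge 0}$ are indexed by the poset
\[
Z_{J,J'} = \{(v,w) : v\le w;\ s_i w\le w,\ v\not\le s_i w\ \forall i\in J;\ v\le s_j v,\ s_j v\not\le w\ \forall j\in J'\}.
\]
This is cut out by conditions involving simple reflections in $J$ and $J'$ on the two ends of the Richardson pair, not by inversions of some $w_1$. Consequently, your proposal to ``begin the reflection order with the inversions of $w_1$'' is not the right normalization. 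The paper instead uses Dyer's result to pick a reflection order in which $T\cap W_{J'}$ comes first and $T\cap W_J$ comes last.

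Second, and more substantively, your two-layer matching scheme (match within $w$-slices via Theorem~\ref{thm:M}, then match the resulting critical cells via an auxiliary interval) is not how the paper proceeds, and it is more delicate than you anticipate. The paper stratifies by the \emph{$v$-coordinate}, writing $Z_{J,J'} = \sqcup_{v\in {}^{J'}W} Z_v$ with $Z_v = P_v \cap Q_v$, and the point of the carefully chosen reflection order is that each $P_v$ and $Q_v$ is an $M_\preceq([v,w_0])$-subset. Since $M$-subsets are closed under intersection and $M_\preceq([v,w_0])$ is a \emph{complete} fixed-point-free matching, its restriction to $Z_v$ is automatically a complete acyclic matching with \emph{no} critical cells whenever $v \neq w_{J'}w_0$; the singleton $Z_{w_{J'}w_0}$ supplies the unique unmatched cell. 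There is no residual second layer to patch together, and global acyclicity across the strata is a one-line observation: any arrow between distinct $Z_v$'s strictly increases $\ell(v)$. Your proposal, by contrast, would genuinely need to verify acyclicity for a glued matching and control the critical cells of the first layer, neither of which you've shown how to do. So the gap is in the expectation of a generic ``patchwork'' argument; the actual proof works precisely because the reflection order is engineered to make $Z_v$ an $M$-subset, collapsing the fibrewise Morse data to nothing.
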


The second subcomplex of $\CB_{\ge 0}$ we study arise from the fiber of the projection map from the totally nonnegative full flag variety to the totally nonnegative partial flag variety for any Kac-Moody group. 
    
The totally nonnegative partial flag variety $\CP_{K, \ge 0}$ admits the natural cellular decomposition into totally nonnegative projected Richardson varieties $\CP_{K, v,w, \ge 0}$ \cites{Rie99, Rie06}, inherited from the cellular the cellular decomposition of $\CB_{\ge 0}$. By definition, the closed stratum $\CP_{K, v,w, \ge 0}$ is the image of the totally nonnegative Richardson variety $\CB_{v,w, \ge0}$ in the full flag variety. The natural projection map $\pi_{v,w}: \CB_{v,w, \ge0} \rightarrow \CP_{K, v,w, \ge 0} $ is almost never a homeomorphism. We show that

\begin{thmintro}
    The fibers of the map $\pi_{v,w}: \CB_{v,w, \ge0} \rightarrow \CP_{K, v,w, \ge 0} $  are contractible regular CW complexes.
\end{thmintro}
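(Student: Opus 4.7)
The plan is to apply discrete Morse theory to each fiber, using an acyclic complete matching coming from Theorem~\ref{thm:M}. Fix a point $p$ in a stratum $\CP_{K,v_0,w_0,>0}\subseteq\CP_{K,v,w,\ge 0}$, and let $F=\pi_{v,w}\i(p)$. The first step is to endow $F$ with a regular CW structure: the cell decomposition $\CB_{v,w,\ge 0}=\sqcup_{v\le v'\le w'\le w}\CB_{v',w',>0}$ restricts to a stratification $F=\sqcup_{(v',w')\in P_F}(\CB_{v',w',>0}\cap F)$, where $P_F$ is the set of pairs whose projected cell meets the stratum of $p$. Using the Marsh--Rietsch product parametrisation of totally positive Richardson cells and the compatibility of $\pi$ with these charts (\cites{Rie99, Rie06}), each nonempty intersection $\CB_{v',w',>0}\cap F$ is an open ball, and regularity of the resulting CW structure follows from the regularity of the ambient complex $\CB_{\ge 0}$ established in \cite{GKL} and \cite{BH22}.

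The second step is to identify the augmented face poset of $F$ with a non-trivial Bruhat interval. Pairs $(v',w')$ and $(v'',w'')$ in $[v,w]^{\mathrm{op}}\times [v,w]$ project to the same stratum of $\CP_K$ exactly when they are related by a controlled parabolic modification coming from $W_K$. Using the standard parabolic factorisation and the known description of projected Richardson strata, I would show that $P_F$, equipped with its inherited closure order, is order-isomorphic to a Bruhat interval $[v_F,w_F]$ in $W$, where $v_F$ and $w_F$ index the top- and bottom-dimensional cells of $F$ respectively. The heuristic is that cells of $\CB_{v,w,\ge 0}$ sharing a $\CP_K$-stratum are related by a translation inside $W_K$, and the relevant translations fill out an honest Bruhat interval after a suitable change of variables.

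The third step is to invoke Theorem~\ref{thm:M} for any reflection order on $W$. This produces an acyclic complete matching on $[v_F,w_F]$, which transports through the identification of step two to an acyclic complete matching on the augmented face poset of the regular CW complex $F$. Discrete Morse theory then implies that $F$ is contractible.

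The main obstacle is the second step: realising $P_F$ as an honest Bruhat interval in $W$, rather than as a more intricate subposet of $[v,w]^{\mathrm{op}}\times[v,w]$. This identification is exactly what is meant by the introduction's phrase ``carefully chosen reflection orders'', since in practice the reflection order must also be compatible with the identification in step two. The flexibility afforded by Theorem~\ref{thm:M}, which holds for every reflection order, should make this compatibility largely automatic once the identification with a Bruhat interval has been pinned down.
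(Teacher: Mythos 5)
Your step 1 (endowing the fiber with a regular CW structure) is in the right spirit and corresponds to Proposition~\ref{prop:BPfiber} in the paper. But step 2 contains a genuine error that undermines the whole strategy: the face poset of a fiber is \emph{not} order-isomorphic to a Bruhat interval in general, and you correctly flag this as the ``main obstacle'' without realising that it does not just require care, it is actually false. The paper computes the face poset explicitly: it is
\[
F^K_{(v',w'),(v,w)} = \{(a,b)\in W_K\times W_K \mid a\le b,\ v\le v'a\le w'b\le w,\ v'a\circ_r b^{-1}=v',\ \ell(v'a)=\ell(v')+\ell(a)\},
\]
which, after the reformulation of Proposition~\ref{prop:reF}, is exactly of the shape of the Springer-fiber poset $Z_{\emptyset,J'}$ of \eqref{eq:ZJJ'} (this analogy is spelled out in a remark). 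Such posets are not pure in general, whereas any non-trivial Bruhat interval is pure and thin; so no change of variables can turn one into the other. There is also a smaller but revealing inconsistency in step 3: if the (non-augmented) face poset of $F$ really were a non-trivial Bruhat interval, Theorem~\ref{thm:M} would hand you a \emph{complete} acyclic matching, and Theorem~\ref{thm:Chari} would then conclude that $F$ is homotopy equivalent to a CW complex with \emph{no} cells at all, i.e.\ empty, which is absurd. For contractibility you need exactly one unmatched $0$-cell, so you want a matching on the face poset itself (not the augmented one) with a unique unmatched minimal element; Theorem~\ref{thm:M} alone cannot produce this.

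The actual mechanism in the paper is different and more delicate. One fixes a reflection order $\preceq$ adapted to $N_R(v')$ as in \eqref{eq:refv'} (so the ``flexibility for any reflection order'' is precisely what you must \emph{not} rely on), forms $M=M_\preceq$ on a suitable ambient Bruhat interval inside $W_K$, and decomposes $F^K_{(v',w'),(v,w)}$ along the first coordinate into slices $P_a$. Each slice is then shown to be an $M$-subset of the ambient interval, hence acyclically and completely matched, except for the unique slice $P_{\tilde z}=\{\tilde z\}$, and the slices cannot be traversed by a directed cycle because any edge between distinct slices strictly increases $\ell(a)$. The notion of $M$-subset and Corollary~\ref{cor:shelling}, together with the generalised-quotient machinery of Bj\"{o}rner--Wachs, are the ingredients you are missing. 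What you would need to add to your outline: drop the claim that $P_F$ is a Bruhat interval, derive the explicit description of $P_F=F^K_{(v',w'),(v,w)}$, choose the reflection order forced by $N_R(v')$, and run the $M$-subset argument slice by slice as in the proof of Theorem~\ref{thm:ZJJ'}.
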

    
We conclude that the map $\pi_{v,w}$ is cell-like in the sense of \cite{Dyd02} or \cite{DHM}*{Definition~2.34}. As the name suggests, one should think of cell-like maps as well-behaved maps between CW complexes. Indeed, Siebenmann's cell-like approximation theorem implies that the two spaces $\CB_{v,w, \ge0}$ and $\CP_{K, v,w, \ge 0}$ are abstractly homeomorphic; cf. \cite{DHM}*{Corollary~2.33}. Therefore, the regularity of $\CB_{v,w, \ge0}$ would readily imply the regularity of $\CP_{K, v,w, \ge 0}$. 
    
This leads to a new proof of the regularity property for the totally nonnegative partial flag varieties, assuming the regularity theorem on the full flag variety. This approach significantly simplifies the proof of the regularity theorems for totally nonnegative partial flag varieties by Galashin-Karp-Lam \cite{GKL} for reductive groups and by us \cite{BH22} for arbitrary Kac-Moody groups, which involves the compatibility of certain total positivity with the atlas map (see, e.g., \cite{BH22}*{\S6-\S8}).

    \vspace{.2cm}
\noindent {\bf Acknowledgement: } 
XH is partially supported by the New Cornerstone Science Foundation through the New Cornerstone Investigator Program and the Xplorer Prize, and by Hong Kong RGC grant 14300023. HB is supported by MOE grants A-0004586-00-00 and A-0004586-01-00. HB thanks Lauren Williams for introducing him to discrete Morse theory. 
     
\section{Discrete Morse theory}\label{sec:combinatorics}
In this section, we recall the discrete Morse theory for regular CW complexes by Forman \cite{Forman}. We focus on the combinatorial reformulation by Chari in \cite{Chari} in terms of acyclic matching on the face poset. 
    
\subsection{Some properties on  posets}\label{subsec:posets}
 Let $(P, \le)$ be a poset. For any $x, y \in P$ with $x \le y$, the interval from $x$ to $y$ is defined to be $[x, y]=\{z \in P \vert x \le z \le y\}$. 
The covering relations are indicated by $\gtrdot$ and $\lessdot$. For any $x, y \in P$ with $x \le y$, a maximal chain from $x$ to $y$ is a finite sequence of elements $y=w_0 \gtrdot w_1 \gtrdot \cdots \gtrdot w_n=x$. In general, the maximal chain may not exist. 

The poset $P$ is called {\it pure} if for any $x, y \in P$ with $x \le y$, the maximal chains from $x$ to $y$ always exist and have the same length. A pure poset $P$ is called {\it thin} if every interval of length $2$ has exactly $4$ elements, i.e. has exactly two elements between $x$ and $y$. A pure poset is called {\it graded} if it has a unique minimal and maximal element. We often denote the unique minimal element (resp. maximal element) of a graded poset by $\hat{0}$ (resp. $\hat{1}$). The coatoms (resp. atoms) of a graded poset are the elements covered by $\hat{1}$ (resp. $\hat{0}$).

Assume $P$ is a CW poset, that is, $P$ is the face poset of a regular CW complex $\Delta(P)$. The topology of $\Delta(P)$ is determined by the poset $P$. Hence we say that $P$ is the face poset of a ball (resp., sphere) if the CW complex $\Delta(P)$ is homeomorphic to a ball (resp., a sphere.)

Now we recall the notion of EL-shellability introduced by Bjorner in \cite{Bj1}. Suppose that the poset $P$ is pure. An edge labeling of $P$ is a map $\lambda$ from the set of all the covering relations in $P$ to a poset $\Lambda$. Labeling $\lambda$ sends any maximal chain of an interval of $P$ to a tuple of $\Lambda$. A maximal chain is called {\it increasing} if the associated tuple of $\Lambda$ is increasing. 
An edge labeling of $P$ is called {\it EL-labeling} if for  every interval, there exists a unique increasing maximal chain, and all the other maximal chains of this interval are less than this maximal chain with respect to the lexicographical order.

\subsection{Discrete Morse theory}\label{sec:discrete}
    Let $G(P)$ be the Hasse diagram of $P$ with the edges being the covering relations. A {\it matching} of the Hasse diagram of $G(P)$ (or simply a matching of $P$) is a subset of edges of $G(P)$ such that any vertex $x \in P$ is incident to at most one edge in $M$. We say that a matching $M$ is {\em complete} if there is no unmatch element. We often equivalently consider $M$ as an involution of the set  $P$ of vertices, such that $M(x) = y$ if the edge $\{x,y\} \in M$ and $M(x) =x$ if $x$ is unmatched.
    
     We can regard the Hasse diagram $G(P)$ of $P$ as a directed graph with edges directed from large elements to small elements with respect to poset ordering. By reversing the directed edges in $M$, we obtain a new directed graph $G_M(P)$. We say that $M$ is an {\it acyclic matching (or discrete Morse matching)} if $G_M(P)$ has no cycles. 
    
    \begin{defi}
    Let $M$ be a matching of $P$, considered as an involution of $P$. We say that a subset $Q \subset P$ is an {\em $M$-subset}, if $M$ restricts to an involution of $Q$. In other words, there is no edge in $M$ connecting any element in $Q$ with any element in $P-Q$. 

    It is clear that the collection of $M$-subsets is preserved under unions, intersections, and subtractions. It is also clear that if $M$ is acyclic, then any $M$ subsets are acyclic.
    \end{defi}
    
    The following reformulation of Forman's discrete Morse theory on regular CW complexes by Chari \cite{Chari} is crucial for us. Our formulation follows \cite{RW}*{Theorem~7.6}.

\begin{thm}\label{thm:Chari} 
    Let $P$ be the face poset of a regular CW complex $\Delta(P)$ with an acyclic matching $M$. Let $m_p$ be the number of unmatched cells of dimension $p$. Then $\Delta(P)$ is homotopic to a (not necessarily regular) CW complex of exactly $m_p$ cells in dimension $p$. 

    In particular, if $m_0 =1$ and $m_p = 0$ for all $p >0$, we can conclude that $\Delta(P)$ is contractible.
\end{thm}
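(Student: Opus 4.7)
My plan is to derive the theorem by turning the acyclic matching $M$ into a discrete Morse function on $\Delta(P)$ and then invoking Forman's main collapsing theorem, which is the standard two-step strategy underlying Chari's reformulation.

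First, I would convert the acyclic matching $M$ on the face poset $P$ into a discrete Morse function $f \colon P \to \BR$. Since $M$ is acyclic, the directed graph $G_M(P)$ (the Hasse diagram of $P$ with the edges of $M$ reversed) has a transitive closure that is a partial order; choose any linear extension of it. Define $f$ by assigning values along this linear extension with small local perturbations so that $f(\sigma) \geq f(\tau)$ precisely on matched covers $\sigma \lessdot \tau = M(\sigma)$, while $f(\sigma) < f(\tau)$ on every unmatched cover. The resulting $f$ is a discrete Morse function on the regular CW complex $\Delta(P)$ in the sense of Forman, and its critical cells are exactly the $M$-unmatched elements of $P$; hence there are $m_p$ critical cells in dimension $p$.

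Second, I would invoke Forman's main theorem: any regular CW complex equipped with a discrete Morse function is homotopy equivalent to a (not necessarily regular) CW complex with exactly one cell of dimension $p$ for each critical cell of dimension $p$. The proof processes matched pairs in an order consistent with the linear extension above; at each step the current matched pair $(\sigma, \tau)$ with $\sigma \lessdot \tau$ appears as a free face pair in the remaining subcomplex, permitting an elementary cellular collapse that preserves homotopy type. Iterating exhausts the matched cells and leaves only the critical cells, assembled into the asserted (possibly non-regular) CW complex. In the special case $m_0 = 1$ and $m_p = 0$ for all $p > 0$, the resulting complex is a single $0$-cell, so $\Delta(P)$ is contractible.

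The main obstacle is verifying that an acyclic matching really does produce a valid sequence of elementary collapses: when it is time to collapse a matched pair $(\sigma, \tau)$, one must know that $\sigma$ has no coface other than $\tau$ among the cells not yet collapsed. Acyclicity of $M$ is exactly what ensures such a valid order exists --- any obstruction would force a directed cycle in $G_M(P)$ --- so the technical heart lies in packaging this combinatorial observation together with Forman's cellular-collapse machinery for regular CW complexes.
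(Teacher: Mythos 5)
The paper does not prove Theorem~\ref{thm:Chari}; it states it as Chari's poset-theoretic reformulation of Forman's discrete Morse theory and cites \cite{Chari} and \cite{RW}*{Theorem~7.6} for the result. Your sketch is essentially the standard proof from those sources: convert the acyclic matching into a discrete Morse function via a linear extension of the acyclic digraph $G_M(P)$, then invoke Forman's collapsing theorem to reduce to a CW complex built from the critical (unmatched) cells. That two-step strategy is correct in outline and is precisely the content of Chari's paper, so there is no discrepancy with the paper to report --- only that the paper treats this as background rather than something to be reproved. One small point worth tightening if you wrote this out in full: the ``small local perturbations'' step needs to be made precise (one typically assigns integer values along the linear extension, giving a matched pair $\sigma \lessdot \tau = M(\sigma)$ consecutive values with $f(\sigma) \geq f(\tau)$ and all other covers strictly increasing, then checks the discrete-Morse-function axioms directly), and the final assembly is not purely a sequence of elementary collapses --- at each critical cell one performs a cell attachment rather than a collapse, which is why the resulting complex need not be regular.
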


If $P$ is the face poset of a regular CW complex $\Delta(P)$, we denote by $\hat{P} = P \sqcup \{\hat{0}\}$ the augmented face poset with an additional minimal element $\hat{0}$. Assuming that $P$ has an acyclic matching, we define an extension of the acyclic matching by matching $\hat{0}$ with any unmatched (if exists) minimal element of $P$. The resultant matching is clearly acyclic. 

\begin{rem}
Our definition of face poset $P$ of a regular CW complex agrees with \cite{Forman} and differs from \cites{Bj1, Bj2}. In particular, this is no added minimal element $\{\hat{0}\}$ corresponding to the empty set. One should think of the difference as ordinary homology versus reduced homology.  
\end{rem}

 \section{Acyclic matchings on Bruhat intervals}\label{subsec:M}
 
Let $(W,I)$ be a Coxeter group with simple reflections $s_i \in  I$. We denote the Bruhat order on $W$ by $v \le w $ and the cover relation by $v \lessdot w$, respectively. We denote the length function by $\ell(\cdot)$. Let $T$ be the set of reflections of $W$ and denote an (arbitrary fixed) reflection order by $\preceq$; cf. \cite{Dyer1}*{\S2}. We shall consider various different reflection orders, which will be specified in the context. 

 \subsection{EL-labelings} \label{subsec:ELorder}
 For any $v\le w$, let $[v,w] = \{z \in W\vert v \le z \le w\}$ be the Bruhat interval. By \cite{BB}*{Lemma~2.7.3}, the poset $[v,w]$ is thin.  For any $w_1 \lessdot w_2$, we label the edge by 
\begin{equation}\label{eq:EL}
\lambda(w_1 \lessdot w_2) = w_1 w_2^{-1} \in T.
\end{equation} 
It follows from \cite{Dyer1} that this defines an EL-labeling of $[v,w]$ with respect to the (any fixed) reflection order $\preceq$ of $T$.
Recall that the opposite order $\preceq^{op}$ on $T$ is again a reflection order. By \cite{Dyer1}, the edge labeling \eqref{eq:EL} is a dual EL-labeling of $[v,w]$ (that is, an EL-labeling for the dual poset), with respect to the opposite reflection order $\preceq^{op}$ of $T$. 

Let $v \lessdot  v_1 \lessdot  \cdots \lessdot  w_1 \lessdot  w$ be the unique increasing maximal chain in $[v,w]$ with respect to the reflection order $\preceq$ on $T$.  We then obtain the following conclusions combining both the EL-labeling and the dual El-labeling of $[v,w]$. 

(a) {\it  The maximal chain $w \gtrdot w_1 \gtrdot \cdots \gtrdot v_1 \gtrdot v$ is the unique decreasing maximal chain, which is lexicographically maximal.}

(b) {\it Let $\{v_1, \dots, v_n\}$ be the atoms of $v$. Then we have $ v v_1^{-1}\preceq  v v_i ^{-1}$, for any $1 \le i \le n$.} 

(c) {\it Let $\{w_1, \dots, w_m\}$ be the coatoms of $w$. Then we have $  w_j w^{-1} \preceq  w_1 w^{-1}$, for any $1 \le j \le m$.}

  \subsection{Acyclic matchings} \label{subsec:defM}

Let $G([v,w])$ be the Hasse diagram of $[v,w]$. We define a subset $M_{\preceq}$ of the edges of $G([v,w])$ with respect to the reflection order $\preceq$ on $T$ as follows. For any $x \in [v,w]$, let $E(x)$ be the set of edges in $G([v,w])$ incident to $x$. Recall that the edges in $E(x)$ are labeled by reflections in $T$ via the labeling function $\lambda: E(x) \rightarrow T$ in \eqref{eq:EL}. Labelings on $E(x)$ are distinct by definition.  Let $E(x,max)$ be the edge such that the labeling is maximal with respect to the order $\preceq$ in $T$. Let $M_{\preceq}([v,w]) = \cup_{x \in [v,w]}E(x,max)$ as a subset of edges of $G([v,w])$.

\begin{thm}\label{thm:M}
Let $v < w$. Then $M_\preceq([v,w])$ is an acyclic complete matching of $[v,w]$.
\end{thm}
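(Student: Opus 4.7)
The plan is to establish the theorem in three stages: (i) that $M_\preceq$ is a well-defined matching, (ii) that it is complete, and (iii) that it is acyclic. For (i), I show that if $e = \{x, y\}$ is the max-label edge at $x$ with label $t$, then $t$ is also the maximum at $y$. Assume for contradiction that some edge $\{y, z'\} \subset [v, w]$ has label $t' \succ t$; without loss of generality $y > x$ (swap roles otherwise). Case $z' > y$: the chain $x \lessdot y \lessdot z'$ has strictly increasing labels $(t, t')$, so by EL it is the unique increasing chain in $[x, z']$. Thinness produces a second chain $x \lessdot y' \lessdot z'$, and property~(b) applied to $[x, z']$ forces $t \preceq \lambda(\{x, y'\})$; but $t$ maximal at $x$ in $[v, w]$ forces $\lambda(\{x, y'\}) \prec t$, contradiction. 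Case $z' < y$ (hence $\ell(z') = \ell(x)$, $z' \neq x$): prepend the unique increasing chain of $[v, x]$ to the edge $x \lessdot y$. Its penultimate label is strictly less than $t$ by property~(c) applied to $[v, x]$ together with $t$ maximal at $x$, so the enlarged chain is strictly increasing in $[v, y]$ and, by uniqueness, is the increasing chain of $[v, y]$. Property~(c) applied to $[v, y]$ then identifies the last label $t$ with the maximum coatom-edge label of $[v, y]$, contradicting the presence of the coatom edge $\{z', y\}$ labeled $t' \succ t$.

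Completeness (ii) is immediate: $v < w$ guarantees that every $x \in [v, w]$ has at least one neighbour in $[v, w]$, so $m(x) \neq x$.

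The main obstacle is acyclicity (iii). I plan to approach it by choosing a vertex $y^*$ of maximal length in a hypothetical directed cycle $C$ in $G_{M_\preceq}$. The directions in $G_{M_\preceq}$ then force the incoming cycle-edge at $y^*$ to be the reversed $M$-edge from $m(y^*) < y^*$ (any non-$M$ incoming would originate at a strictly higher vertex, violating maximality), carrying the maximum label at $y^*$; the outgoing cycle-edge must be a non-$M$ descent from $y^*$ to some $q < y^*$ with strictly smaller label. Iterating this local analysis around $C$, I then plan to extract a descending subpath of $C$ that, when compared against the unique increasing chain of a suitably chosen subinterval, yields a second strictly increasing chain and thereby violates EL uniqueness — in the spirit of the Case $z' < y$ argument of (i). The key technical challenge, and what I expect to require the most care, is the global construction of the competing chain: the cycle $C$ need not sit inside a single subinterval $[v', w'] \subset [v, w]$, so the choice of comparison interval and the control of labels along $C$ have to be done carefully using the reflection-order structure.
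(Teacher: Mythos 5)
Your parts (i) and (ii) are essentially the paper's own argument. The well-definedness proof uses the same two cases (with the same ``WLOG $x\lessdot M(x)$'' reduction), and the case split and use of properties (b) and (c) match the paper; the only slip is that in your Case $z'<y$ you invoke property (c) on $[v,x]$, where what is actually needed is simply that $t$ is the maximal label at $x$, so that the penultimate label of the increasing chain of $[v,x]$ is $\prec t$ and the chain extends to the increasing chain of $[v,y]$. Completeness is the same one-line observation.

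Part (iii) is where there is a genuine gap. Your plan --- pick a maximum-rank vertex $y^{*}$ of a hypothetical cycle, note its in-edge is the reversed $M$-edge with maximal label and its out-edge a non-$M$ descent with smaller label, then ``iterate'' to extract a \emph{descending subpath} and compare with an increasing chain --- cannot be carried out as stated. For a matching on a graded poset with rank-one covers, any directed cycle in $G_M$ is forced to live in two consecutive ranks and to strictly alternate reversed $M$-edges (going up) with ordinary Hasse edges (going down): once the cycle drops two ranks below its maximum it can never climb back, since the up-step is an $M$-edge and the vertex it reaches is then $M$-matched downward. So there is no descending subpath of length more than one. Iterating your local observation only yields an alternating chain of inequalities $\lambda(a_i\lessdot b_i)\succ\lambda(a_{i+1}\lessdot b_i)\prec\lambda(a_{i+1}\lessdot b_{i+1})\succ\cdots$ around the cycle, which is not by itself contradictory; one needs a further global input. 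The paper supplies this input via the shelling: it proves (Corollary~\ref{cor:shelling}) that when the coatoms $w_1,\dots,w_n$ of $w$ are ordered by label, each $\bigcup_{i<k}[v,w_i]$ is an $M$-subset, and then argues acyclicity of $M_\preceq([v,w])$ by induction on $\ell(w)-\ell(v)$, peeling off the sub-intervals $[v,w_i]$ one at a time and using that Hasse edges between the newly added piece and the previously built union point in only one direction; the final piece $[v,w]\setminus\bigcup_i[v,w_i]=[M(w),w]$ is handled the same way. This inductive/shelling decomposition is the idea your proposal is missing, and I do not see how to replace it with the kind of single-interval increasing-chain comparison you describe.
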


The rest of this section is devoted to the proof of Theorem~\ref{thm:M}. We first show that $M_\preceq([v,w])$ is indeed a complete matching of $[v,w]$ in \S\ref{sec:matching}. We then discuss the relation between the matching $M_\preceq([v,w])$ and the shelling (induced by the EL-labeling) of $[v,w]$ in \S\ref{sec:shelling}. Finally, we prove that the matching $M_\preceq([v,w])$ is acyclic in \S\ref{sec:acyclic}. We write $M = M_\preceq([v,w])$.

\begin{rem}
Our matching in Theorem~\ref{thm:M} coincides with the matching constructed in \cite{RW}*{Corollary~8.2} thanks to Corollary~\ref{cor:shelling}. 
The explicit (and non-recursive) construction of such matchings allows us to study matchings on more complicated posets in \S\ref{sec:Sp} and \S\ref{sec:fibers}. 
\end{rem}

\subsubsection{}\label{sec:matching} We first show that $M$ is indeed a complete matching of $[v,w]$.
 
  We can clearly consider $M$ as a map from $[v,w]$ to $[v,w]$, such that \[
\{x, M(x)\} =E(x, max),\quad  \text{ for any } x \in [v,w].
\]
There is no unmatched element by definition. It remains to show that it is an involution.  

Let $x \in [v,w]$. Without loss of generality, we assume $x \lessdot M(x)$. We assume by contradiction that $M^2(x)  \neq x$. We divide into two cases.

We first consider the case $M(x) \lessdot M^2(x)$. By definition, we have $\lambda(x \lessdot M(x)) \preceq \lambda (M(x) \lessdot M^2(x))$. Consider the length-two interval $[x, M^2(x)]$, which consists precisely of $4$ elements, $x$, $M(x)$, $M^2(x)$, $y$. By assumption, $x \lessdot M(x) \lessdot M^2(x)$ is the unique increasing maximal chain in $[x, M(x)]$ in the EL-labeling with respect to the reflection $\preceq$ on $T$. So we must have $\lambda (x \lessdot M(x)) \preceq \lambda (x \lessdot y) $ by \S\ref{subsec:ELorder} (b). This contradicts the fact that $\lambda (x \lessdot M(x))$ is maximal. 

We then consider the case $M^2(x) \lessdot M(x)$. By definition, we have $\lambda(x \lessdot M(x))  \preceq \lambda (M^2(x) \lessdot M(x))$. We consider the interval $[v,x]$ and let $v \le v_1 \le \cdots \le x_1 \le x$ be the unique increasing maximal chain in the EL-labeling. Since $ \lambda (x_1 \preceq x) \preceq \lambda(x \lessdot M(x))$ by definition, the extended chain $v \le v_1 \le \cdots \le x_1 \le x \le M(x)$ is increasing. Hence it is the unique increasing chain in the interval $[v,w]$. Then by \S\ref{subsec:ELorder} (c), we must have $\lambda (M^2(x) \lessdot M(x)) \preceq \lambda(x \lessdot M(x))$. This is a contradiction.

\subsubsection{}\label{sec:shelling}
Let $[v',w'] \subset [v,w]$ be a sub-interval. If $[v',w']$ is an $M_{\preceq}([v,w])$-subset of $[v,w]$, then the restriction of $M_{\preceq}([v,w])$ is the same as $M_{\preceq}([v',w'])$, as involutions/matchings on $[v',w']$.

The next corollary discusses the relation between the matching $M_{\preceq}([v,w])$ of $[v,w]$ and the shelling of $[v,w]$.

\begin{cor}\label{cor:shelling}
Let $v < w$. 
\begin{enumerate} 
\item Let $w_1, \dots, w_n$ be the coatoms of $[v,w]$, ordered such that $ w w_1^{-1} \preceq \cdots \preceq  w w_n^{-1}$ in $T$. Then $\cup_{1 \le i < k} [v, w_i]$ are $M_{\preceq}([v,w]) $-subsets for any $k \le n$. 
\item Let $v_1, \dots, v_m$ be the atoms of $[v,w]$, ordered in such a way that $vv_1^{-1} \preceq \cdots \preceq  v v_n^{-1}$ in $T$. Then $\cup_{1 \le i < k} [v_i, w]$ are $M_{\preceq}([v,w]) $-subsets for any $k \le m$. 
\end{enumerate} 
\end{cor}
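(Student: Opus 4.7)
The plan is to prove part~(1) directly; part~(2) then follows by the symmetric argument that swaps the roles of top and bottom in the interval. For part~(1), take $x \in [v, w_i]$ with $i < k$ and show $M(x) \in \bigcup_{j<k}[v, w_j]$, where I write $M = M_{\preceq}([v,w])$ for brevity. I would split into cases according to whether $M(x) \le x$ or $x \lessdot M(x)$. The downward case is immediate, since then $M(x) \in [v,x] \subseteq [v,w_i]$.

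In the upward case $x \lessdot M(x)$, the defining property of $M$ forces $\lambda(x \lessdot M(x))$ to be the $\preceq$-maximum label at $x$ over all incident edges in $G([v,w])$, so $M(x)$ is in particular the $\preceq$-maximum-label atom of $[x,w]$. First I would rule out $M(x) = w$: this would make $(x,w)$ the max-label edge at $w$, which forces $x$ to be the partner $w_n$ of $w$; but $w_n \notin \bigcup_{j<k}[v,w_j]$ since $k \le n$, contradicting $x$ lying in that union. Hence $M(x) < w$.

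The key step is to pass to the opposite reflection order $\preceq^{\mathrm{op}}$, which is itself a reflection order, so the same edge labeling is an EL-labeling of $[x, w]$ with respect to $\preceq^{\mathrm{op}}$. Applying property~(b) of \S\ref{subsec:ELorder} in the order $\preceq^{\mathrm{op}}$, the unique $\preceq^{\mathrm{op}}$-increasing chain of $[x,w]$ starts at $x$ via its $\preceq^{\mathrm{op}}$-minimum-label atom, which is precisely the $\preceq$-maximum-label atom $M(x)$. Applying property~(c) in the same order, that chain ends at $w$ via the $\preceq^{\mathrm{op}}$-maximum-label coatom of $[x,w]$, i.e., the $\preceq$-minimum-label coatom, which in the corollary's indexing is $w_{m^*}$ with $m^* = \min\{j : w_j \ge x\}$. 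Traveling along this chain yields $M(x) \le w_{m^*}$, and since $w_i \ge x$ forces $m^* \le i < k$, we conclude $M(x) \in [v, w_{m^*}] \subseteq \bigcup_{j<k}[v, w_j]$.

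Part~(2) is proved by the dual argument applied to $[v,x]$ in the original order $\preceq$: when $M(x) \lessdot x$, $M(x)$ is the $\preceq$-maximum-label coatom of $[v,x]$, hence by~(c) it lies on the unique $\preceq$-increasing chain of $[v,x]$, which by~(b) starts at $v$ through $v_{n^*}$ with $n^* = \min\{j : v_j \le x\} \le i < k$, giving $M(x) \in [v_{n^*}, w]$. The main conceptual obstacle is recognizing that in part~(1) one must flip to $\preceq^{\mathrm{op}}$: the standard $\preceq$-EL-chain of $[x,w]$ runs through the minimum-label atom rather than the relevant maximum-label atom $M(x)$, so swapping orders is what simultaneously places $M(x)$ at the start of an EL-chain and forces its other end to land on a coatom of sufficiently low index.
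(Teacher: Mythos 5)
Your proposal is correct and follows essentially the same route as the paper: pass to the opposite reflection order $\preceq^{\mathrm{op}}$ on $[x,w]$, identify $M(x)$ as the first step of the unique $\preceq^{\mathrm{op}}$-increasing maximal chain via property~(b), and use property~(c) to see that chain terminates at a coatom $w_{m^*}$ with $m^*<k$, hence $M(x)\le w_{m^*}$. The one place you add something beyond what the paper writes is your explicit exclusion of the case $M(x)=w$ via the already-established involution property (forcing $x=w_n$, contradicting $x\in\bigcup_{j<k}[v,w_j]$ since $k\le n$); the paper tacitly assumes the chain $x\le x_1\le\cdots\le w'\le w$ has at least two covers so that $x_1\le w'$, which breaks down precisely when $x$ is a coatom and $M(x)=w$, so your extra step is a genuine and welcome tightening rather than a detour. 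Your sketch of part~(2) as the dual argument on $[v,x]$ in the original order $\preceq$ matches what the paper omits as ``similar,'' and the analogous degenerate case $M(x)=v$ is ruled out the same way.
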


\begin{proof}We show part (1). Part (2) is similar and will be omitted.

We write $M = M_{\preceq}([v,w]) $ and consider $M$ as an involution on $[v,w]$. We need to show that for any $x \in \cup_{1 \le i < k} [v, w_i]$, we have $M(x) \in   \cup_{1 \le i < k} [v, w_i]$. If $M(x) \lessdot x$, this is trivial. 

Assume $x \lessdot M(x)$. By \cite{Dyer1}, the interval $[x,w]$ is EL-shellable with respect to the opposite reflection order $\preceq^{op}$ on $T$. Let $x \le x_1 \le \dots \le w' \le w$ be the unique increasing maximal chain with respect to the opposite reflection order $\preceq^{op}$ on $T$. Applying \S\ref{subsec:ELorder} (b) with respect to the order $\preceq^{op}$, we obtain that $\lambda(x \lessdot x_1)$ is minimal with respect to the order $\preceq^{op}$ among atoms of $x$, hence maximal with respect to the order $\preceq$. So $M(x) = x_1$. By \S\ref{subsec:ELorder} (c), we see that $w'$ must be $w_i$ for some $i <k$. This finishes the proof.
\end{proof}

\subsubsection{}\label{sec:acyclic}

We show that the matching $M = M_\preceq([v,w])$ is acyclic. 

We prove this by induction on $m = \ell(w) - \ell(v)$. When $m=1$, this is trivial. We assume $m\ge 2$. 

 Let $w_1, \dots, w_n$ be the coatoms of $[v,w]$, ordered such that $ ww_1^{-1} \preceq \cdots \preceq  w  w_n^{-1}$  in $T$. It follows by Corollary~\ref{cor:shelling} that $[v,w_1]$ is an $M$-subset of $[v,w]$. Therefore, the involution $M$ restricts to an involution of $[v,w_1]$. It is clear that this is the same as $M_{\preceq}([v,w_1])$. Hence $[v,w_1]$ is an acyclic $M$-subset by the induction hypothesis.

 (a) {\it  The subset $\cup_{1 \le i < n }[v,w_i]$ is an acyclic $M$-subset.}

Without loss of generality, we assume $n=3$ for notational simplicity. The general case follows by induction on $n$ via the same argument. 

 By Corollary~\ref{cor:shelling}, we know $[v,w_1] \cup [v,w_2]$ is an $M$-subset.  We consider the partition 
 \[
 [v,w_1] \cup [v,w_2] = [v,w_1] \sqcup P_2, \quad \text{where } P_2 = [v,w_2] - [v,w_1].
 \]
The subset $P_2 = [v,w_2] - [v,w_1]$ is also an $M$-subset. Then the restriction of $M$ on $P_2$ is the same as the restriction of $M_{\preceq}([v,w_2])$ by the construction in \S\ref{subsec:defM}. Since the matching $M_{\preceq}([v,w_2])$ of $[v,w_2]$ is acyclic by the induction hypothesis, the matching on the subset $P_2$ is also acyclic. 
Now we have shown that there are no cycles within either $[v,w_1]$ or $P_2$. On the other hand, any possible cover relation between $x \in P_2$ and $y \in [v, w_1]$ must be of the form $y \lessdot x$ by definition. So, there cannot be any cycle that traverses both $[v, w_1]$ and $P_2$. This proves the claim. 

(b) {\it We have $[v,w] - \cup_{1 \le i < n }[v,w_i] = [M(w), w]$.} 

Since the Bruhat interval $[v,w]$ is thin, any $y \le M(w)$ must be in $\cup_{1 \le i < n }[v,w_i]$. This proves (b).

Now we have a partition of $[v, w] = \Big(\cup_{1 \le i < n }[v,w_i] \Big) \sqcup [M(w), w]$. We can prove similarly to the proof of claim (a) that the matching $M$ on $[v, w]$ is acyclic.

We now finish the proof of Theorem~\ref{thm:M}.

\subsection{An example}\label{example:1}
We give an example of our matching. This example also shows that the matching is not always a special matching in the sense of \cite{BCM}. 

Let $S_4 = Perm\{1,2,3,4\}$ be the symmetric group, and let $s_1 = (12)$, $s_2 = (23)$, $s_3 = (34)$ be simple reflections. Fix a reflection order $\preceq$ on $T$ such that 
\[
s_1 \preceq s_1s_2s_1 \preceq s_1s_2s_3 s_2 s_1 \preceq s_2 \preceq s_2 s_3 s_2 \preceq s_3.
\]
Then the matching $M = M_{\preceq}([s_1, s_2s_3s_1s_2])$ on the interval $[s_1, s_2s_3s_1s_2]$ consists of the red edges. 
\[
\begin{tikzpicture}[node distance=1cm]

  \node (top) at (0,0) {$s_{2}s_3s_1s_2$};

  \node   (l1-1) at (-3,-2) {$s_3s_2s_3$};
  \node  (l1-2) at (-1,-2) {$s_3s_1s_2$};
    \node  (l1-3) at (1,-2) {$s_2s_1s_3$};
    \node  (l1-4) at (3,-2) {$s_2s_1s_2$};
  \node   (l2-1) at (-3,-4) {$s_3s_2$};
  \node  (l2-2) at (-1,-4) {$s_2s_3$};
    \node  (l2-3) at (1,-4) {$s_2s_1$};
    \node  (l2-4) at (3,-4) {$s_1s_2$};
  \node   (l3-1) at (-0,-6) {$s_2$};


  \draw (top) --  node[pos=0.5,left] {$\scriptstyle s_2s_1s_2$}  (l1-1);
  \draw (top) -- node[pos=0.5] {$\scriptstyle  s_2 $}  (l1-2);
\draw (top) --node[pos=0.5] {$\scriptstyle  s_3s_2 s_1s_2s_3$} (l1-3);
    \draw (top)[red] --node[pos=0.5, right] {$\scriptstyle  s_2 s_3s_2$} (l1-4);

  \draw (l1-1)  --node[pos=0.2,left] {$\scriptstyle  s_2$} (l2-1);
    \draw (l1-1)[red] --node[pos=0.2,left] {$\scriptstyle  s_3$} (l2-2);

  \draw (l1-2) -- node[pos=0.2] {$\scriptstyle  s_1$} (l2-1);
    \draw (l1-2)[red] --node[pos=0.1] {$\scriptstyle  s_3$} (l2-4);

  \draw (l1-3) --node[pos=0.1] {$\scriptstyle  s_2s_1s_2$} (l2-2);
    \draw (l1-3)[red] --node[pos=0.2, right] {$\scriptstyle  s_2s_3s_2$} (l2-3);

      \draw (l1-4) --node[pos=0.2] {$\scriptstyle  s_1$} (l2-3);
    \draw (l1-4) --node[pos=0.2] {$\scriptstyle  s_2$} (l2-4);

    \draw (l2-1)[red] -- node[pos=0.5] {$\scriptstyle  s_3$} (l3-1);
    \draw (l2-2) -- node[pos=0.5] {$\scriptstyle  s_2s_3s_2$} (l3-1);
    \draw (l2-3) --node[pos=0.5] {$\scriptstyle  s_1s_2s_1$} (l3-1);
    \draw (l2-4) --node[pos=0.5] {$\scriptstyle  s_1$} (l3-1);

\end{tikzpicture}
\]
Note that $s_3 s_2 s_3 \lessdot s_2 s_3 s_1 s_2$, but $M(s_3 s_2 s_3) \not \le M(s_2 s_3 s_1 s_2) $.



    \section{Totally nonnegative Springer fibers} \label{sec:Sp}
        
Our first application is for totally non-negative Springer fibers and the posets arising from them. 

\subsection{Totally nonnegative flag varieties}\label{sec:fl}
We first give a quick review of the totally nonnegative flag varieties. Let $G$ be a split Kac-Moody group over $\mathbb{R}$. Fix a pinning $(T, B^+, B^-, x_i, y_i; i \in I)$ of $G$. Let $(W,I)$ be the Weyl group of $G$. For any $K \subset I$, we denote by ${}^K W$ (resp. $W^K$) the set of left (resp. right) coset minimal-length representatives. Let $\CB=G/B^+$ be the (thin) full flag variety of $G$. It admits a stratification into the (open) Richardson varieties $$\CB=\sqcup_{v \le w} \CB_{v, w}, \text{ where } \CB_{v, w}=B^+ \dot w \cdot B^+ \cap B^- \dot v \cdot B^+.$$

Let $G_{\ge 0}$ be the totally nonnegative submonoid defined by Lusztig in \cites{Lu94, Lu19}.  The totally nonnegative full flag variety $\CB_{\ge 0}$ is defined to be the (Hausdorff) closure of $G_{\ge 0} \cdot B^+$ in $\CB$. The totally positive Richardson varieties are defined by $\CB_{v, w, >0}=\CB_{\ge 0} \cap \CB_{v, w}$. Let $\CB_{v, w, \ge 0}$ be the Hausdorff closure of $\CB_{v, w, >0}$. Then we have $\CB_{\ge 0} = \sqcup_{v\le w} \CB_{v,w,> 0}$. 

It is known that for any $v \le w$,   

\begin{itemize}
\item $\CB_{v, w, >0}$ is a semi-algebraic cell isomorphic to $\BR_{>0}^{\ell(w)-\ell(v)}$ (see \cite{Rie99}, \cite{BH21});

\item $\CB_{v, w, \ge 0}$ equals $\sqcup_{v \le v' \le w' \le w} \CB_{v', w', >0}$ (see \cite{Rie06}, \cite{BH22});

\item $\CB_{v, w, \ge 0}$ is a regular CW complex homeomorphic to a closed ball (see \cite{GKL}, \cite{BH22}). 
\end{itemize}

\subsection{Total positivity on Springer fibers}\label{sec:Springer}
In the rest of this section, we assume furthermore that $G$ is a reductive group, i.e. a Kac-Moody group of finite type. Then $W$ is a finite Coxeter group. We denote the longest element by $w_0$. For any $J \subset I$, we denote the longest element of the parabolic subgroup $W_J$ by $w_J$.

Let $u \in G_{\ge 0}$ be a unipotent element. By \cite{Lu94}, one may associate with $u$ two subsets $J$ and $J'$ of $I$ with $J \cap J'=\emptyset$. Let $\CB^u_{\ge 0}=\{B \in \CB_{\ge 0}\vert u \cdot B=B\}$ be the totally nonnegative Springer fiber of $u$. Lusztig proved in \cite{LuSp} that 
\[
     \CB^u_{\ge0}  = \sqcup_{z \in Z_{J,J'}} \CB_{v,w, >0}, 
\]
where 
    \begin{equation}\label{eq:ZJJ'}
    \begin{split}
     Z_{J, J'} = \{(v,w) \in W \times W \vert & v \le w; s_i w\le w, v \not \le s_iw,  \text{ for any } i \in J;\\
    & v \le s_jv, s_j v \not \le w, \text{ for any } j \in J'\}.
     \end{split}
    \end{equation}
Then $Z_{J, J'}$ is naturally a poset by with $(v, w) \le (v',w')$ if and only if $v' \le v '\le w' \le w$. It follows from the regularity of $\CB_{\ge 0}$ that $\CB^u_{\ge0}$ is also a regular CW complex. However, $\CB^u_{\ge 0}$ is not equidimensional, in general. 

Now we state the main result of this section. 
    
\begin{thm}\label{thm:ZJJ'}
Suppose that $W$ is a finite Coxeter group. Then the poset $Z_{J, J'}$ admits an acyclic matching with a unique unmatched element $(w_{ J'} w_0, w_{ J'} w_0)$.
\end{thm}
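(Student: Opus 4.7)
The plan is to adapt the construction of the acyclic matching $M_\preceq$ on Bruhat intervals from Theorem~\ref{thm:M} to the poset $Z_{J, J'}$. Fix a reflection order $\preceq$ on $T$. At each $(v, w) \in Z_{J, J'}$, the Hasse edges in $Z_{J, J'}$ are of two kinds: ``$w$-edges'' $(v, w) \leftrightarrow (v, w')$ with $w, w'$ differing by a single Bruhat cover and the joining conditions of \eqref{eq:ZJJ'} preserved (naturally labeled by the reflection $w w'^{-1} \in T$), and ``$v$-edges'' $(v, w) \leftrightarrow (v', w)$ (labeled by $v' v^{-1} \in T$). Mirroring \S\ref{subsec:defM}, I define $\widetilde M(v, w)$ to be the endpoint of the incident edge whose reflection label is maximal under $\preceq$, and aim to show that $(w_{J'}w_0, w_{J'}w_0)$ is the unique fixed point.

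The first task is to show $\widetilde M$ is a well-defined involution on $Z_{J, J'}$. I plan to do this by a length-two interval analysis paralleling \S\ref{sec:matching}: given the thinness of ambient Bruhat intervals and the EL-labeling of \S\ref{subsec:ELorder}, the length-two sub-intervals at $(v, w)$ and at $(v_1, w_1) = \widetilde M(v, w)$ admit the same max-chain analysis inside the ambient full flag variety poset, provided I can check that the four corners of such sub-intervals lie in $Z_{J, J'}$ consistently, or that a missing corner does not spoil the maximum-label choice. For the unique fixed point, I would use the identity $D_L(w_{J'}w_0) = I \setminus J'$ (which follows from $\ell(w_{J'}w_0) = \ell(w_0) - \ell(w_{J'})$) to show that every potential Hasse neighbor of $(w_{J'}w_0, w_{J'}w_0)$ in $Z_{J,J'}$ violates either the descent condition on $w$, the ascent condition on $v$, or one of the non-comparability constraints $v\not\le s_iw$ or $s_jv\not\le w$.

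For acyclicity, I would adapt the inductive decomposition of \S\ref{sec:acyclic}: at each step, pick the largest-$\preceq$-label coatom/atom move at a top-dimensional element of $Z_{J, J'}$, decompose $Z_{J, J'}$ into an $\widetilde M$-subset plus a complementary matched pair via an analogue of Corollary~\ref{cor:shelling}, and run the induction on $|Z_{J, J'}|$. The added edges go only in the downward face-poset direction at each induction step and therefore cannot complete a directed cycle in $G_{\widetilde M}$.

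The hard part is the length-two analysis in the involutivity step: $Z_{J, J'}$ is not itself a Bruhat interval, and some of its length-two intervals may be missing a corner that the joining conditions exclude, so the thin-interval argument from \S\ref{sec:matching} does not import directly. I expect the right resolution to be a ``convexity'' lemma for $Z_{J, J'}$ inside the ambient Bruhat-interval poset, showing that the four corners of every length-two interval of $Z_{J,J'}$ produced by $\widetilde M$ are all present; this will require a careful use of the subword property of Bruhat order together with the interplay of the descent and joining conditions defining \eqref{eq:ZJJ'}. Should a uniform reflection order $\preceq$ fail to realize these $M$-subsets simultaneously (which may well happen when $|J \cup J'| \ge 2$, since the dihedral constraints on reflection orders forbid placing all of $\{s_k : k \in J\cup J'\}$ at the top), the fallback is a fiberwise construction via a $v$-dependent reflection order on each slice $\{v\} \times \{w : (v,w) \in Z_{J,J'}\}$, with global acyclicity ensured by noting that matched edges preserve $v$ and the unmatched cover relations changing $v$ strictly increase $\ell(v)$, so any cycle must be confined to a single fiber where Theorem~\ref{thm:M} applies.
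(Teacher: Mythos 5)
Your ``fallback'' strategy is the right one and matches the paper's in spirit: fiber $Z_{J,J'}$ over $v \in {}^{J'}W$, build an acyclic complete matching on each slice $Z_v = \{w : (v,w) \in Z_{J,J'}\}$ (with the slice $v = w_{J'}w_0$ being a singleton, hence the unique unmatched element), and conclude global acyclicity by noting that every Hasse arrow crossing slices strictly increases $\ell(v)$. Your first approach (a global max-label matching directly on $Z_{J,J'}$) is indeed doomed for the reason you identify: $Z_{J,J'}$ is not thin, and your hoped-for ``convexity'' lemma about length-two intervals does not hold; the paper never attempts this.

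However, the fallback as stated has a genuine gap at its core: you never explain how to produce an acyclic complete matching on a slice $Z_v$, and this is where essentially all the work lies. You write that ``any cycle must be confined to a single fiber where Theorem~\ref{thm:M} applies,'' but $Z_v$ is not a Bruhat interval, so Theorem~\ref{thm:M} does not apply to it directly. The paper's solution is to realize $Z_v$ as an $M$-subset of the genuine Bruhat interval $[v,w_0]$: writing $Z_v = P_v \cap Q_v$ with $P_v = \{w \ge v : s_j v \not\le w \ \forall j \in J'\}$ and $Q_v = \{w \ge v : v \not\le s_i w \ \forall i \in J\}$, one chooses a single reflection order $\preceq$ on $T$ (via Dyer's Proposition~2.3, using $J \cap J' = \emptyset$) with $T \cap W_{J'}$ at the bottom and $T \cap W_J$ at the top, and shows separately that $P_v$ and $Q_v$ are $M_{\preceq}([v,w_0])$-subsets. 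The $P_v$ case uses Corollary~\ref{cor:shelling} directly (atoms in direction $W_{J'}$ get peeled off first); the $Q_v$ case requires a new lemma (Lemma~\ref{lem:interval}) identifying the left $W_J$-cosets in $[v,w_0]$ as intervals, so that the top-positioned $W_J$ reflections carve out $M$-subsets. None of this appears in your sketch. Moreover, your worry that the dihedral constraints on reflection orders might prevent placing $\{s_k : k \in J \cup J'\}$ all at the top is a red herring: the construction does not need them all at the top --- it needs $W_{J'}$ at the bottom and $W_J$ at the top, which is compatible precisely because $J \cap J' = \emptyset$. Finally, for the unique fixed point, the paper's argument is simpler than your descent-set computation: since $(v,w) \in Z_{J,J'}$ forces $v, w \in {}^{J'}W$, the slice over the top element $v = w_{J'}w_0$ of ${}^{J'}W$ is automatically the singleton $\{w_{J'}w_0\}$.
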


Combining Theorem~\ref{thm:ZJJ'} with Theorem~\ref{thm:Chari}, we establish the contractibility of the totally nonnegative Springer fiber, conjectured by Lusztig in \cite{Lu94}*{\S8.16}. 

\begin{thm}\label{thm:Sp}
The totally nonnegative Springer fiber 
$\CB^u_{\ge 0} $ is contractible. 
\end{thm}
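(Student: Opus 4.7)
The plan is to deduce Theorem~\ref{thm:Sp} directly from the combinatorial input of Theorem~\ref{thm:ZJJ'} through the discrete Morse machinery summarized in Theorem~\ref{thm:Chari}. No new technical ingredients beyond these should be needed; the entire argument is a short packaging step.

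First I would identify the face poset of the regular CW complex $\CB^u_{\ge 0}$ with the poset $Z_{J,J'}$ defined in \eqref{eq:ZJJ'}. By Lusztig's cell decomposition $\CB^u_{\ge 0} = \sqcup_{(v,w) \in Z_{J,J'}} \CB_{v,w,>0}$ from \cite{LuSp}, together with the closure relation $\overline{\CB_{v,w,>0}} \cap \CB_{\ge 0} = \sqcup_{v \le v' \le w' \le w} \CB_{v',w',>0}$ recalled in \S\ref{sec:fl}, the incidence structure among the cells of $\CB^u_{\ge 0}$ coincides with the order on $Z_{J,J'}$. Regularity of the CW structure on $\CB^u_{\ge 0}$ is inherited from that of $\CB_{\ge 0}$.

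Next, I would invoke Theorem~\ref{thm:ZJJ'} to obtain an acyclic matching on $Z_{J,J'}$ whose unique unmatched element is $(w_{J'} w_0, w_{J'} w_0)$. The corresponding cell $\CB_{w_{J'} w_0,\, w_{J'} w_0, >0}$ is zero-dimensional, since $\dim \CB_{v,w,>0} = \ell(w) - \ell(v)$ vanishes when $v = w$. Hence, in the notation of Theorem~\ref{thm:Chari}, this produces $m_0 = 1$ and $m_p = 0$ for all $p > 0$.

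Applying Theorem~\ref{thm:Chari} then shows that $\CB^u_{\ge 0}$ is homotopy equivalent to a CW complex with exactly one $0$-cell and no higher cells, which is a point; therefore $\CB^u_{\ge 0}$ is contractible. The genuinely hard part of this program is of course Theorem~\ref{thm:ZJJ'}, where the reflection order on $W$ must be chosen carefully so that Theorem~\ref{thm:M} produces an acyclic matching on each Bruhat subinterval compatible with the constraints defining $Z_{J,J'}$; once that is in hand the present statement is essentially a bookkeeping consequence.
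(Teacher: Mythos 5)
Your proposal is correct and matches the paper's argument exactly: the paper likewise deduces Theorem~\ref{thm:Sp} by combining Theorem~\ref{thm:ZJJ'} (acyclic matching on $Z_{J,J'}$ with the unique unmatched element $(w_{J'}w_0, w_{J'}w_0)$, corresponding to a $0$-cell) with Theorem~\ref{thm:Chari}. Your unpacking of why the unmatched cell is $0$-dimensional and how the face poset of $\CB^u_{\ge 0}$ is identified with $Z_{J,J'}$ is the same bookkeeping the paper leaves implicit.
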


\begin{remark}
Lusztig also conjectured that the totally nonnegative Grothendieck fiber $\CB^g_{\ge 0}$ for any $g \in G_{\ge 0}$ is contractible. This conjecture would follow from Theorem \ref{thm:Sp} together with the conjecture on the Jordan decomposition of $G_{\ge 0}$ \cite{HL1}*{Conjecture~3.1}. 
\end{remark}
   
\subsection{Proof of Theorem \ref{thm:ZJJ'}}   This section is devoted to a proof of Theorem \ref{thm:ZJJ'}. We keep the notations from \S\ref{sec:Springer}.
    
\subsubsection{Some subposets}

By definition, if $(v, w) \in Z_{J, J'}$, then $v \in {}^{J'} W$. For $j \in J'$, since $v \le w$ and $s_j v \neq w$, we have $w \le s_j w$.  Thus $w \in {}^{J'} W$. In particular, if $(v, w) \in Z_{J, J'}$ and $v=w_{J'} w_0$, then $w=w_{J'} w_0$. 

Let $v \in {}^{J'} W$. We define  
\begin{align*}
Z_v &= \{w \in W \vert (v, w ) \in Z_{J, J'}\};\\
P_v &= \{w \in W \vert v \le w, s_j v \not \le w\,\, \forall j \in J'\} \supset Z_v;\\
Q_v &= \{w \in W \vert v \le w,  v \not \le s_iw\,\, \forall i \in J\} \supset Z_v. 
\end{align*}
Then it follows that $Z_v  =P_v \cap Q_v$ and $Z_{J, J'}=\sqcup_{v \in {}^{J'} W} Z_v$. 

By \cite{Dyer1}*{Proposition~2.3}, there exists a reflection order $\preceq$ on $T$ such that
\begin{equation}\label{eq:ZT}
    \begin{split}
      t_1 \preceq t_2, \quad \text{for any } t_1 \in T\cap W_{J'}, t_2 \in T - W_{J'};\\
      t_1 \preceq t_2, \quad \text{for any } t_1 \in T- W_{J}, t_2 \in T \cap W_{J}.
      \end{split}
\end{equation}

This reflection order induces an EL-labeling on $[v, w_0]$. Let $M=M_{\preceq}([v,w_0])$ be the acyclic complete  matching of $G([v, w_0])$ defined in \S\ref{subsec:defM}.

\subsubsection{The $M$-subset $P_v$}\label{sec:M-P}
Suppose that $v \neq w_{J'}w_0$. By Theorem~\ref{thm:M}, $M$ gives an acyclic complete matching on $[v, w_0]$. Let $\{v_1, \dots, v_n\}$ be the set of atoms of $[v,w_0]$. Recall that $v \in {}^{J'} W$. So if $ vv_a^{-1} \in W_{J'}$, then $v_a = s_iv$ for some $i \in J'$. By the choice of the reflection order $\preceq$ on $T$ and Corollary~\ref{cor:shelling}, the set $\cup_{i \in J'} [s_iv, w_0]$ is an $M$-subset. Thus $P_v= [v, w_0] - \cup_{i \in J'} [s_iv, w_0]$ is again an $M$-subset. Therefore, $M$ gives an acyclic complete matching on $P_v$. 

\subsubsection{The $M$-subset $Q_v$}\label{sec:M-Q}

\begin{lem}\label{lem:interval}
Let $v, w \in W$ with $v \le w$. Then $\{a \in W_J \vert v \le a \, {}^Jw \}= [x, w_J]$ for some $x \in W_J$.
\end{lem}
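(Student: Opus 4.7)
The plan is to show that the set $S := \{a \in W_J \mid v \le a \cdot {}^Jw\}$ is a principal upper set of the Bruhat poset $(W_J, \le)$, that is, $S = [x, w_J]$ for some $x \in W_J$.

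First I would verify that $S$ is nonempty and upward-closed in $W_J$. Writing $w = w^J \cdot {}^Jw$ with $w^J \in W_J$, the fact that ${}^Jw \in {}^JW$ makes the map $a \mapsto a \cdot {}^Jw$ length-additive and Bruhat-order-preserving; in fact, it gives an order isomorphism from $(W_J, \le)$ onto the coset $W_J \cdot {}^Jw$, viewed as a subposet of $(W, \le)$. Applying this to $w^J \le w_J$ yields $w \le w_J \cdot {}^Jw$ in $W$, hence $v \le w \le w_J \cdot {}^Jw$, so $w_J \in S$. The upper-set property of $S$ in $W_J$ is immediate from the same isomorphism.

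The heart of the argument is showing that $S$ has a unique minimum, which I would prove by induction on $\ell(v)$. The base case $v = e$ gives $S = W_J = [e, w_J]$. For the inductive step, choose a left descent $s$ of $v$, set $v' = sv$, and apply the inductive hypothesis to $v'$ to obtain $S' := \{a \in W_J \mid v' \le a \cdot {}^Jw\} = [x', w_J]$ for a unique $x' \in W_J$. Since $v' < v$, we have $S \subseteq S'$, so $\min S$, once shown to exist, must lie in $[x', w_J]$.

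The main obstacle is extracting $\min S$ from $x'$, and the key tool is the Bruhat lifting property: for $a \in S'$, the implication $v' \le a \cdot {}^Jw \Rightarrow v = sv' \le a \cdot {}^Jw$ is automatic whenever $s$ is a left descent of $a \cdot {}^Jw$ in $W$, while in the opposite case it must be checked separately. The rest of the argument is a case analysis depending on whether $s \in W_J$ (so that $s$ permutes the coset $W_J \cdot {}^Jw$, acting on the first factor $a$) or $s \notin W_J$ (so that $s \cdot (a \cdot {}^Jw)$ lies in a different coset from $W_J \cdot {}^Jw$). In each case one identifies the exact subset of $[x', w_J]$ on which the dichotomous lifting condition holds, and verifies via the order isomorphism of the first paragraph that this subset is again a principal upper set $[x, w_J]$, with $x \in W_J$ uniquely determined from $x'$ and $s$. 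This completes the induction and establishes the lemma.
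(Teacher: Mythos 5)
Your proposal outlines a genuinely different strategy (induction on $\ell(v)$ plus the lifting property) from the one the paper uses (Dyer's EL-shellability), but as written it has a real gap at the heart of the inductive step. You correctly reduce the lemma to uniqueness of the minimal element of the upward-closed set $S := \{a \in W_J \mid v \le a\,{}^Jw\}$, and you correctly note that for $a \in S' := \{a\in W_J \mid v'\le a\,{}^Jw\} = [x',w_J]$ the implication $v'\le a\,{}^Jw \Rightarrow v\le a\,{}^Jw$ is automatic whenever $s(a\,{}^Jw) < a\,{}^Jw$. But the final sentence, that ``one identifies the exact subset \ldots and verifies \ldots that this subset is again a principal upper set,'' is precisely the content of the lemma and is asserted rather than proved. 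If $s\in W_J$, then $s(a\,{}^Jw)<a\,{}^Jw$ is equivalent to $sa<a$, and $\{a\in W_J : sa<a\}$ is not upward-closed in $W_J$; moreover, for $a$ with $sa>a$ the lifting property only gives $v\le s(a\,{}^Jw)$ and $v'\le a\,{}^Jw$, neither of which decides $v\le a\,{}^Jw$ (this is the direction of lifting where the conclusion is strictly stronger than what the hypotheses supply). If $s\notin W_J$, the coset $W_J g$ containing $s\cdot(a\,{}^Jw)$ depends on $a$ and not just on $s$ and ${}^Jw$, so there is no uniform description over $[x',w_J]$ of the $a$'s for which $s$ is a left descent of $a\,{}^Jw$, and the order isomorphism $W_J\cong W_J\,{}^Jw$ does not reduce the problem. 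In both cases, closing the inductive step requires an argument of essentially the same difficulty as the original statement.

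For comparison, the paper avoids this altogether: choose a reflection order on $T$ in which every reflection in $T\setminus W_J$ precedes every reflection in $T\cap W_J$, and use Dyer's result that the labeling \eqref{eq:EL} is an EL-labeling of $[v,\,w_J\,{}^Jw]$. For any minimal $x$ of $S$, the last edge of the increasing chain of $[v,\,x\,{}^Jw]$ carries a label outside $W_J$ (by minimality of $x$), while a maximal chain from $x\,{}^Jw$ up to $w_J\,{}^Jw$ inside the coset carries labels in $W_J$; their concatenation is therefore increasing, hence is \emph{the} increasing maximal chain of $[v,\,w_J\,{}^Jw]$, and this pins down $x$ with no case analysis.
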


\begin{proof} We can assume $w = w_J {}^Jw$, where $w_J$ is the longest element in $W_J$. Let $Q=\{a \in W_J \vert v \le a \, {}^Jw \}$. It is easy to see that $w_J$ is the unique maximal element in $Q$.  It remains to show that the minimal element of $Q$ is unique.
Let $x \in Q$ be a minimal element. Recall $[v,w]$ is EL-shellable with respect to the reflection order $\preceq$ on $T$. We consider the unique increasing maximal chain in $[v,x\,{}^Jw]$, denoted by $c(v,x\,{}^Jw)=v \le v_1 \le v_2 \le \cdots \le v_l \le x\,{}^Jw$. Let $c(x\,{}^Jw,w) = x\,{}^Jw \le x_1  \le \cdots \le w$ be the unique increasing maximal chain in $[x\,{}^Jw,w]$. Then we must have $xv^{-1}_{l} \not \in W_J$ by the minimality of $x$. We also have $x_1 x^{-1} \in W_J$, since we can construct the chain in $[x, w_J]$ by the convention \eqref{eq:EL}. So the combined chain 
\[
v \le v_1 \le v_2 \le \cdots \le v_l \le x\,{}^Jw \le x_1 \le \cdots \le w
\]
is an increasing maximal chain in $[v,w]$. Since this chain is unique, the element $x$ must be unique. 
\end{proof}

 For any $w \in {}^J W$, we define $Q_{v, w}=[v, w_0] \cap W_J w$. Then by definition, $Q_v=[v, w_0]-\sqcup_w Q_{v,w}$, where $w$ runs over the elements in ${}^J W$ with $Q_{v,w} \neq \{w_J w\}$. 

By the definition of the EL-labeling in \eqref{subsec:ELorder}, any edge in $Q_{v,w}$ is labeled by a reflection in $t \in W_J \cap T$. On the other hand, if an edge $\{x, y\}$ on the Hasse diagram $G([v,w_0])$ connects some $Q_{v,w} \neq Q_{v,w'}$, it will be labeled by some $t' \not \in W_J$. We have $t' \preceq t$ by the definition of the reflection order. By Lemma~\ref{lem:interval} and the isomorphism of posets $Q_{v,w} \cong Q_{v,w} w^{-1}$, the Hasse diagram of $Q_{v,w}$ is a connected graph. Hence by the definition in \S\ref{subsec:defM} and choice of the reflection order $\preceq$ in \eqref{eq:ZT}, $Q_{v, w}$ is an $M$-subset if $Q_{v,w} \neq \{w_J w\}$. Hence $Q_v$ is again an $M$-subset. Since $M$ gives an acyclic complete matching on $[v, w_0]$, $M$ also gives an acyclic complete matching on $Q_v$.

\subsubsection{The matching on $Z_{J, J'}$} 
Recall that $Z_{J, J'}=\sqcup_{v \in {}^{J'} W} Z_v$ and $Z_v=P_v \cap Q_v$. For $v \neq w_{J'} w_0$, $M$ gives an acyclic complete matching on $P_v$ and $Q_v$. Thus $M$ also gives acyclic complete matching on $Z_v$. On the other hand, if $v=w_{J'} w_0$, then $Z_v=\{(w_{ J'} w_0, w_{ J'} w_0)\}$ is a singleton. Therefore $M$ gives a matching on $Z_{J, J'}$ with the only unmatched element of $(w_{ J'} w_0, w_{ J'} w_0)$. 

It remains to show that this matching on $Z_{J, J'}$ is acyclic. Note that any arrow from $(v,w) \in Z_v$ to $(v',w') \in Z_{v'}$ for $v \neq v'$ must be of the form 
    \[
        (v,w) \rightarrow (v',w'), \quad \text{with } v \lessdot v', w = w'.
    \]
    Such arrows always increase the length of $v$, i.e., $\ell(v') = \ell(v) +1$. So we cannot have any cycle traversing $Z_v$ for different $v \in W$. Since we do not have any cycles within each $Z_v$ either, the matching has to be acyclic. 
    
The proof of Theorem~\ref{thm:ZJJ'} is complete. 


\section{Fibers on totally nonnegative flag varieties}\label{sec:fibers}
\subsection{Partial flag varieties}
 
We keep the notations in \S\ref{sec:fl}. For any $K \subset I$, we denote by $P_K^+ \supset B^+$ the corresponding standard parabolic subgroup of $G$ and $\CP_K=G/P^+_K$ the partial flag variety. Let $\pi: \CB \to \CP_K$, $g \cdot B^+ \mapsto g \cdot P^+_K$ be the natural projection map. The totally nonnegative partial flag variety $\CP_{K, \ge 0}$ is defined to be the image $\pi(\CB_{\ge 0})$. Since $\pi$ is proper, $\CP_{K, \ge 0}$ equals the Hausdorff closure of $G_{\ge 0} \cdot P^+_K$ in $\CP_K$. 

The cellular decomposition $\CB_{\ge 0}=\sqcup_{v \le w} \CB_{v, w, >0}$ induces a cellular decomposition on $\CP_{K, \ge 0}$ (see \cite{Rie06}): 
\[
     \CP_{K, \ge 0} = \sqcup_{(v,w) \in Q_K} \CP_{K, v,w, > 0}, \quad \text{ where } \CP_{K, v,w, > 0} = \pi(\CB_{v, w, >0}) \text{ by definition}.
\]
The face poset $Q_K$ is defined as $Q_K = \{(v,w) \in W \times W \vert w \in W^K, v \le w\}$, where $(v',w') \le (v,w)$ if and only if there exists $u \in W_K$ with $ v \le v'u \le w'u \le w$.

Let $(v,w) \in Q_K$. Recall that $\CB_{v, w, \ge 0}$ is the Hausdorff closure of $\CB_{v, w, >0}$. Let $\CP_{K, v, w, \ge 0}=\pi(\CB_{v, w, \ge 0})$ be the Hausdorff closure of $\CP_{K, v, w, >0}$. We denote by $\pi_{v,w} :\CB_{v, w, \ge0} \rightarrow \CP_{K,v, w, \ge 0}$  the projection obtained by restricting $\pi$.

\subsection{The fibers}\label{subsec:He0}

We first recall some results on the Coxeter group $W$. By  \cite{He}*{Lemma 1}\footnote{We use the notation $\circ_l$ and $\circ_r$ for $\triangleright$ and $\triangleleft$ in loc.cit., respectively.}, for any $x, y \in W$, 
\begin{itemize}[leftmargin=*]
	\item there exists a unique maximal element (with respect to $\le$) in $\{x' y'; x' \le x, y' \le y\}$. We denote this element by $x \ast y$. Moreover, $x \ast y=x' y=x y'$ for some $x' \le x$ and $y' \le y$ with $\ell(x \ast y)=\ell(x')+\ell(y)=\ell(x)+\ell(y')$;
	
	\item there exists a unique minimal element (with respect to $\le$) in $\{x' y; x' \le x\}$. We denote this element by $x \circ_l y$. Moreover, $x \circ_l y=x' y$ for some $x' \le x$ with $\ell(x \circ_l y)=\ell(y)-\ell(x')$;
	
	\item there exists a unique minimal element (with respect to $\le$) in $\{x y'; y' \le y\}$. We denote this element by $x \circ_r y$. Moreover, $x \circ_r y=x y'$ for some $y' \le y$ with $\ell(x \circ_r y)=\ell(x)-\ell(y')$.
\end{itemize}

Let $(v',w') \le (v,w)$ in $Q_K$. 
We define 
    \begin{align*}
    F^K_{(v',w'), (v,w)}  =  \{(a,b) \in W_K \times W_K \vert & a \le b ; v \le v'a \le w'b \le w, v'a \circ_r b^{-1} = v', \\
        & \ell(v'a) = \ell(v') +\ell(a)\}.
    \end{align*}
In particular, if $(v', w')=(v, w)$, then $F=\{(e, e)\}$ is a singleton. The set  $F^K_{(v',w'), (v,w)}$ is naturally a poset with $(a,b) \le (a',b')$ if and only if $ a\le a' \le b' \le b$ in the usual Bruhat order of $W_K$.

 \begin{prop}\label{prop:BPfiber}
  Let $(v', w') \le (v,w)$ in $Q_K$. For any $x \in \CP_{K, v',w', >0}$, we have 
    \[
         \pi_{v,w}^{-1}(x) \cong \bigsqcup_{(a,b) \in  F^K_{(v',w'), (v,w)} } \CB_{a,b, >0}.
    \] 
    In particular, the fiber $\pi_{v,w}^{-1}(x)$ is a regular CW complex with the face poset $F^K_{(v',w'), (v,w)}$. 
    \end{prop}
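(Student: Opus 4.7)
The plan is to analyze the fiber using the fiber-bundle structure of $\pi: \CB \to \CP_K$ combined with the Marsh--Rietsch positive parametrizations. The map $\pi$ is a $P_K^+/B^+$-bundle, and $P_K^+/B^+$ is isomorphic to the flag variety of the Levi subgroup $L \subset P_K^+$, which I denote $\CB_L$. Fixing a lift $\tilde x \in \CB_{v', w', >0}$ of $x$ (such a lift exists since $\pi(\CB_{v',w',>0}) = \CP_{K, v', w', >0}$ by definition) identifies $\pi^{-1}(x)$ with $\CB_L$. The fiber $\pi_{v,w}^{-1}(x)$ is then the intersection of this copy of $\CB_L$ with the cell decomposition $\CB_{v,w,\ge 0} = \sqcup_{v \le v'' \le w'' \le w} \CB_{v'', w'', >0}$, so the task reduces to (i) determining which open Richardsons $\CB_{v'',w'',>0}$ meet $\pi^{-1}(x)$ and (ii) identifying each intersection as a positive Richardson of $\CB_L$.

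For (i), I would use that the image $\pi(\CB_{v'',w'',>0})$ is an open projected Richardson $\CP_{K, v''', w''', >0}$, where $w''' = (w'')^K \in W^K$ is the $W^K$-part of $w''$ and $v'''$ is the canonical $Q_K$-representative, computed via a suitable $\circ_r$-minimization. Since the cells of $\CP_{K, \ge 0}$ are disjoint, $x \in \pi(\CB_{v'',w'',>0})$ forces $(v''', w''') = (v', w')$. Unpacking this: $w''$ must be of the form $w' b$ with $b \in W_K$ and $\ell(w'') = \ell(w') + \ell(b)$, and $v''$ must be of the form $v' a$ with $\ell(v'a) = \ell(v') + \ell(a)$ and $v'a \circ_r b^{-1} = v'$. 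Together with $a \le b$ (needed for $v'a \le w'b$) and the ambient closure constraints $v \le v'a \le w'b \le w$, these are exactly the defining conditions of $F^K_{(v',w'),(v,w)}$.

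For (ii), I would show that $\CB_{v'a, w'b, >0} \cap \pi^{-1}(x)$ is homeomorphic to the positive Richardson $\CB_{a, b, >0}$ in $\CB_L$. The key is a factored reduced expression for $w'b$ as the concatenation of reduced expressions for $w'$ and $b$ (allowed by $\ell(w'b) = \ell(w') + \ell(b)$); the corresponding Marsh--Rietsch parametrization splits any point of $\CB_{v'a, w'b, >0}$ into a $\CP_K$-component (ranging over $\CP_{K, v', w', >0}$) and a Levi-component (ranging over the fiber direction). The conditions $\ell(v'a) = \ell(v') + \ell(a)$ and $v'a \circ_r b^{-1} = v'$ ensure that after fixing the $\CP_K$-component to $x$, the remaining Levi-component runs precisely over $\CB_{a, b, >0} \subset \CB_L$. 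Assembling all strata gives a disjoint union of positive Richardsons in $\CB_L$ indexed by $F^K_{(v',w'),(v,w)}$. Since $\CB_{L, \ge 0}$ is already known to be a regular CW complex (being an instance of the full-flag case applied to $L$) with face poset the Bruhat pairs in $W_K$, and the conditions defining $F^K_{(v',w'),(v,w)}$ are stable under the closure order, $\pi_{v,w}^{-1}(x)$ inherits a regular CW structure with the claimed face poset.

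The main obstacle will be step (ii): making precise the split of the Marsh--Rietsch parametrization into a ``base'' part for $\CP_{K,v',w',>0}$ and a ``fiber'' part for $\CB_{a, b, >0}$ inside $\CB_L$, and verifying that the length and $\circ_r$ conditions from $F^K$ correspond exactly to the combinatorial compatibility needed for this split. Once this compatibility is established, the disjoint-union decomposition, the closure relations, and the regular CW structure all follow from the regularity of $\CB_{L, \ge 0}$ together with the fact that $F^K_{(v',w'),(v,w)}$ is a downward-closed subposet of the pair-Bruhat poset on $W_K$.
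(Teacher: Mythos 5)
Your proposal takes essentially the same approach as the paper's proof: both fix a positive Marsh--Rietsch lift $g$ of $x$, translate the fiber by $g^{-1}$, and factor the positive parametrization of each stratum $\CB_{p,q,>0}$ along a reduced expression $\mathbf{q}=\mathbf{w'}\mathbf{b}$ to peel off the base component and identify what remains with a positive Richardson $\CB_{a,b,>0}$ indexed by $a,b\in W_K$. The conditions defining $F^K_{(v',w'),(v,w)}$ then drop out exactly as you indicate, from the constraint that the positive subexpression of $p$ in $\mathbf{q}$ begin with $\mathbf{v'_+}$ (this is also the cleaner route to $a\le b$, which does not follow from $v'a\le w'b$ alone as your parenthetical suggests, but rather from $\mathbf{p}_2=\mathbf{a}_+$ being a subexpression of $\mathbf{b}$).
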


    \begin{proof}
     We fix a reduced expression ${\bf w'} = s_{i_1} s_{i_2} \cdots s_{i_n} $ of $w'$ and let ${\bf v'_+} = t_{i_1} t_{i_2} \cdots t_{i_n}$ be the unique positive subexpression of ${\bf w'}$ in the sense of \cite{MR}*{Lemma~3.5}. Here $t_{i_j} = 1$ or $s_{i_j}$ for any $1 \le j \le n$. Following \cite{MR}*{Definition~5.1}, we define \[
 G_{\bf v'_+, \bf w', >0}=\{g_1 g_2 \cdots g_n\vert  g_j=\dot s_{i_j}, \text{ if } t_{i_j}=1; \text{ and } g_j \in y_{i_j}(\mathbb{R}_{>0}), \text{ if } t_{i_j}=s_{i_j}.\}.
\]
Then the natural map $G_{\bf v'_+, \bf w', >0} \rightarrow \CP_{K, v',w', >0}$,  $g \mapsto g \cdot \CP_K^+$,  is an isomorphism. We write 
     \[
     x = g \cdot \CP_K^+, \qquad \text{ for the unique } g \in  G_{\bf v'_+, \bf w', >0}.
     \]
    We then define $\phi: \pi_{v,w}^{-1}(x) \rightarrow \CB$, $y \mapsto g^{-1}y$. 

Let $y \in \pi_{v,w}^{-1}(x)$ such that $y \in \CB_{p,q, >0}$ for some $v \le p \le q \le w$ in $W$. We have $q = w b$ for some $b \in W_J$. We can fix a reduced expression ${\bf q} = {\bf w'} {\bf b}$ of $q$ starting with the expression ${\bf w'}$. 

Let ${\bf p_+}$ be the unique positive subexpression of $p$ in ${\bf q}$. We write ${\bf p_+} = ({\mathbf p_1}, {\bf p_2})$ as a subexpression of $({\bf w'}, {\bf b})$. Since $\pi_{v,w}(y) = x$, we must have ${\bf p_1} = {\bf v'_+}$ and ${\bf p_2} = {\bf a_+}$ as positive expressions of some $a \in W_J$ in ${\bf b}$. 
In particular, we have 
\begin{equation}\label{eq:pq}
G_{\bf p_+, \bf q, >0} = G_{\bf v'_+, \bf w', >0} \cdot G_{\bf a_+, \bf b, >0}.
\end{equation}
We hence can write $y = g' g'' \cdot \CB^+$ for some $g' \in G_{\bf v'_+, \bf w', >0}$ and $g'' \in G_{\bf a_+, \bf b, >0}$. 
We also see that $v' a \circ_r b^{-1} = v'$  and $\ell(p) = \ell(v') + \ell(a)$, since this subexpression ${\bf p_+}$ must be rightmost and reduced/non-decreasing in the sense of \cite{MR}*{Definition~3.4}.  

Since $\pi_{v,w}(y) =x$, we obtain $g' = g$. Then we conclude that $g^{-1} y = g'' \cdot \CB^+$. This shows $\phi(y) \in \CB_{\ge0}$. More precisely, we have shown that 
\[
\phi(\pi_{v,w}^{-1}(x)) \subset \bigsqcup_{(a,b) \in  F^K_{(v',w'), (v,w)} } \CB_{a,b, >0}.
\]
On the other hand, it is easy to show that the reverse inclusion using \eqref{eq:pq}. This shows that $\pi_{v,w}^{-1}(x)$ is a regular CW complex with the face poset $F^K_{(v',w'), (v,w)}$. 
    \end{proof}

Now we state the main combinatorial result of this section. 

\begin{thm}\label{thm:FK}
For any $(v', w') \le (v, w)$ in $Q_K$, the poset $F^K_{(v',w'), (v,w)}$ admits an acyclic matching with a unique unmatched element $(\tilde{z}, \tilde{z}) \in W_K \times W_K$.
\end{thm}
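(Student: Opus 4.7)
The plan is to follow the blueprint of Theorem~\ref{thm:ZJJ'} by partitioning $F^K_{(v',w'),(v,w)}$ according to its first coordinate. Set
\[
A = \{a \in W_K : v \le v'a,\ \ell(v'a) = \ell(v') + \ell(a)\} \quad \text{and} \quad F^K_a = \{b \in W_K : (a,b) \in F^K\},
\]
so $F^K = \bigsqcup_{a \in A} F^K_a$ as a set. The covering relations in $F^K$ are of two types: ``$b$-covers'' $(a,b) \lessdot (a, b')$ within a single block $F^K_a$, and ``$a$-covers'' $(a,b) \lessdot (a', b)$ linking different blocks. I intend to construct a matching built entirely from $b$-covers, so that it automatically restricts to each $F^K_a$ separately, mirroring the $Z_v$ decomposition in the proof of Theorem~\ref{thm:ZJJ'}.

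For the matching within each block, I would choose a reflection order $\preceq$ on $T$ using \cite{Dyer1}*{Proposition~2.3}, tailored to the data $(K, (v,w), (v',w'))$ in the spirit of \eqref{eq:ZT}. For each $a \in A$, apply Theorem~\ref{thm:M} to the Bruhat interval $[a, w_K] \subseteq W_K$ to obtain the acyclic complete matching $M_a = M_{\preceq}([a, w_K])$. Then, mimicking \S\ref{sec:M-P} and \S\ref{sec:M-Q}, I would show that $F^K_a$ is an $M_a$-subset of $[a, w_K]$. Both of the sets
\[
\{b \in [a, w_K] : w'b \le w\} \quad \text{and} \quad \{b \in [a, w_K] : v'a \circ_r b^{-1} = v'\}
\]
are down-closed in Bruhat order on $[a, w_K]$ (the first because $w'b$ is length-additive in $w'\in W^K$, the second because $b_1 \le b_2$ implies $v'a \circ_r b_1^{-1} \ge v'a \circ_r b_2^{-1}$ as a min over a smaller set), so their complements are up-closed. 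With the right reflection order these complements decompose, via Corollary~\ref{cor:shelling}(2) applied iteratively, as unions of upper intervals in $[a, w_K]$ that are $M_a$-subsets. Consequently $F^K_a$ is an $M_a$-subset, and $M_a|_{F^K_a}$ is a complete matching whenever $|F^K_a| \ge 2$.

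I expect $\tilde z$ to be the unique element of $A$ with $F^K_{\tilde z}$ a singleton. Heuristically, $\tilde z$ should be the maximal element of $A$ satisfying $w'\tilde z \le w$: since $\{b \in W_K : w'b \le w\}$ is a Bruhat ideal with a unique maximum, this ``saturating'' element exists, and beyond it no $b > \tilde z$ meets the $F^K$-conditions. This yields $(\tilde z, \tilde z)$ as the unique unmatched element. For acyclicity, note that any covering relation between two different blocks $F^K_a \neq F^K_{a'}$ is necessarily an unmatched $a$-cover $(a,b) \lessdot (a', b)$ with $a \lessdot a'$, so in the Morse graph it points from $(a', b)$ to $(a, b)$ and strictly decreases $\ell(a)$. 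Any cycle in the Morse graph must therefore be confined to a single block $F^K_a$, which contradicts the acyclicity of $M_a$ by Theorem~\ref{thm:M}.

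The main obstacle I anticipate is verifying the $M_a$-subset property for the set cut out by $v'a \circ_r b^{-1} = v'$. Unlike the simpler Bruhat inequality $w'b \le w$ handled in the spirit of \S\ref{sec:M-Q}, this condition involves the $\circ_r$ operation and encodes a positive-subexpression compatibility between $a$ and $b$ relative to $v'$. Showing that its ``forbidden'' complement in $[a, w_K]$ decomposes cleanly into upper intervals aligned with a single reflection order will require a careful analysis of the interaction between $\preceq$ and the Marsh--Rietsch combinatorics underlying the proof of Proposition~\ref{prop:BPfiber}.
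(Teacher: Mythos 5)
Your high-level plan is exactly the paper's: partition $F^K_{(v',w'),(v,w)}$ by the first coordinate $a$, build a matching entirely from ``$b$-covers'' so it restricts to each block, and observe that any cross-block arrow strictly decreases $\ell(a)$, ruling out cycles traversing blocks. But the ``main obstacle'' you flag at the end --- handling the condition $v'a \circ_r b^{-1} = v'$ --- is a genuine gap, and the paper's Proposition~\ref{prop:reF} is precisely the missing ingredient. That proposition shows this $\circ_r$ condition is equivalent to $ta \not\le b$ for all $t \in N_R(v') = \{t\in T: v't < v'\}$ with $a \lessdot ta$. Only after this reformulation does the block $F^K_a$ become $[a,z'] - \bigcup_i [a_i, z']$ where the $a_i = ta$ are atoms of $[a,z']$ labeled by reflections in $N_R(v')$ --- and then Corollary~\ref{cor:shelling}(2), with a reflection order making $N_R(v')$ an initial segment, gives the $M_a$-subset property. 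The down-closedness you observe (correctly) is not enough; you need the forbidden set to be a union of upper intervals indexed by an initial segment of the reflection order.

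Two further points need repair. First, the needed reflection order puts the inversion set $N_R(v')$ first; since $N_R(v')$ is generally not of the form $T\cap W_J$, you want \cite{Dyer1}*{Lemma~2.7} (initial sections from inversion sets), not Proposition~2.3 as you cite. Second, your justification for uniqueness of $\tilde z$ is incomplete: the set $\{a \in W_K : z\le a\le z',\ \ell(v'a)=\ell(v')+\ell(a)\}$ is the generalized quotient $\{v'\}\backslash[z,z']$ of Bj\"orner--Wachs, and showing it has a unique maximal element requires their result \cite{BW}*{Theorem~3.7} that minimal upper bounds of elements of a generalized quotient again lie in the quotient. It is not a consequence of $\{b: w'b\le w\}$ being an order ideal. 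Likewise, showing $F^K_a$ is a singleton only at $a=\tilde z$ uses the gradedness of the generalized quotient (\cite{BW}*{Theorem~3.4}) to produce $a' \gtrdot a$ inside the quotient and then check $(a,a')\in F^K$.
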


Theorem~\ref{thm:FK} will be proved in \S\ref{sec:FK}. Combining Theorem~\ref{thm:FK} with Theorem~\ref{thm:Chari}, we establish the following contractibility result. 

\begin{thm}\label{thm:BPfiber}
The fibers of the projection map $\pi_{v,w} :\CB_{v, w, \ge0} \rightarrow \CP_{K,v, w, \ge 0}$ are contractible.
\end{thm}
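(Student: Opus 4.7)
The plan is to assemble the three preceding results in this section---Proposition~\ref{prop:BPfiber}, Theorem~\ref{thm:FK}, and Theorem~\ref{thm:Chari}---into a direct proof. Concretely, I will first identify each fibre as a regular CW complex with a combinatorially controlled face poset, then invoke the acyclic matching produced in Theorem~\ref{thm:FK}, and finally read off contractibility from discrete Morse theory.

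Fix $x \in \CP_{K,v,w,\ge 0}$. The cellular decomposition $\CP_{K,\ge 0}=\sqcup_{(v',w')\in Q_K}\CP_{K,v',w',>0}$ inherited from $\CB_{\ge 0}$ partitions $\CP_{K,v,w,\ge 0}$ into open strata indexed by pairs $(v',w') \le (v,w)$ in $Q_K$, so there is a unique such $(v',w')$ with $x \in \CP_{K,v',w',>0}$. Proposition~\ref{prop:BPfiber} then realises $\pi_{v,w}^{-1}(x)$ as a regular CW complex whose face poset is $F^K_{(v',w'),(v,w)}$, with the cell indexed by $(a,b)$ being $\CB_{a,b,>0}$ of dimension $\ell(b)-\ell(a)$.

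Next I will apply Theorem~\ref{thm:FK} to obtain an acyclic matching on $F^K_{(v',w'),(v,w)}$ whose unique unmatched element is a diagonal pair $(\tilde z,\tilde z) \in W_K\times W_K$. The key observation is that the corresponding cell $\CB_{\tilde z,\tilde z,>0}$ has dimension $\ell(\tilde z)-\ell(\tilde z)=0$, so in the notation of Theorem~\ref{thm:Chari} we have $m_0=1$ and $m_p=0$ for all $p>0$. Theorem~\ref{thm:Chari} then yields that $\pi_{v,w}^{-1}(x)$ is contractible, and since $x \in \CP_{K,v,w,\ge 0}$ was arbitrary every fibre of $\pi_{v,w}$ is contractible.

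The main technical work---constructing the reflection-order-based acyclic matching on $F^K_{(v',w'),(v,w)}$ and verifying that the unique unmatched element is a diagonal pair (hence a zero-cell)---has already been isolated into Theorem~\ref{thm:FK}. Granting that result, the contractibility statement for the fibres follows with no further topological input; the real obstacle lies entirely on the combinatorial side, namely in the proof of Theorem~\ref{thm:FK}.
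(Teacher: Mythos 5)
Your proposal is correct and follows exactly the route the paper intends: identify each fibre as a regular CW complex with face poset $F^K_{(v',w'),(v,w)}$ via Proposition~\ref{prop:BPfiber}, apply the acyclic matching of Theorem~\ref{thm:FK} whose sole unmatched cell $(\tilde z,\tilde z)$ is zero-dimensional, and conclude via Theorem~\ref{thm:Chari}. The paper compresses this into a single sentence, so your write-up is simply a fuller version of the same argument.
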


In \S\ref{sec:regular}, we will use Theorem \ref{thm:BPfiber} to give a new and simple proof of the regularity property of $\CP_{K, \ge 0}$. This is our main motivation to study the fibers of $\pi_{v,w} :\CB_{v, w, \ge0} \rightarrow \CP_{K,v, w, \ge 0}$. It is also interesting to study the fiber of $\pi: \CB_{\ge 0} \to \CP_{\ge 0}$, which we will not pursue here.


\subsection{Reformulation of $F^K_{(v',w'), (v,w)}$}\label{subsec:FK} 
The result in this subsection holds for any Coxeter group. We first prove a counterpart of Lemma~\ref{lem:interval}.
\begin{lem}\label{lem:z'}
Let $w' \le w$ in $W$. Then $\{a \in W_K \vert w'a \le w\} = [e,z']$ for some $z' \in W_K$.
\end{lem}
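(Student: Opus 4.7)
The plan is to mirror the proof of Lemma~\ref{lem:interval}, exploiting the fact that the present lemma is its formal right-handed analog (left and right cosets swapped). First I would note that $e$ always lies in the set, since $w' \cdot e = w' \le w$, so the set is nonempty and bounded in length, and hence admits a maximal element $z'$. It remains to show (i) uniqueness of $z'$, and (ii) that every $a \le z'$ in $W_K$ still lies in the set.

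For (ii), the downward closure, I would run a subword argument. Any $a \le z'$ is (after passing to reduced expressions) a reduced subword of $z'$, whose letters all lie in $K$. Concatenating a reduced expression of $w'$ on the left turns this into a reduced subword relation between reduced expressions of $w'a$ and $w'z'$, giving $w'a \le w'z' \le w$.

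For (i), the uniqueness of $z'$, I would use EL-shellability. Fix a reflection order $\preceq$ on $T$ such that $t_1 \preceq t_2$ whenever $t_1 \in T \cap W_K$ and $t_2 \in T - W_K$, which is available by \cite{Dyer1}, and use the dual EL-labeling $\lambda'(x \lessdot y) = x^{-1} y$ on $[w', w]$. The key observation is that $[w', w'z'] \subseteq w' W_K$: any $u$ in this interval projects to $w'$ in $W^K$ by Bruhat-monotonicity of the canonical projection $W \to W^K$. Consequently every cover in $[w', w'z']$ carries a label in $T \cap W_K$, and the unique increasing maximal chain of this sub-interval uses only labels from the initial block of $\preceq$. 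Maximality of $z'$ in the set forces the first label of the unique increasing maximal chain of $[w'z', w]$ to lie outside $T \cap W_K$. Concatenating the two chains produces a $\preceq$-increasing chain in $[w', w]$, which by EL-shellability must coincide with the unique increasing maximal chain of $[w', w]$; this pins down the ``break point'' $w'z'$ and hence $z'$.

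The main obstacle I anticipate is the geometric claim $[w', w'z'] \subseteq w'W_K$ together with verifying that the dual EL-labeling is compatible with the reflection order in the required way. Both are standard consequences of results in \cite{BB} and \cite{Dyer1}, after which the uniqueness argument proceeds in very close parallel with the proof of Lemma~\ref{lem:interval}.
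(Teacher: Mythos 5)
Your proposal takes essentially the same route as the paper's: establish downward-closedness of the set, then invoke Dyer's EL-shellability with a reflection order placing $T\cap W_K$ first and locate $w'z'$ as the unique ``break point'' of the increasing maximal chain of $[w',w]$ at which the labels leave $T\cap W_K$. The paper transposes to $[(w')^{-1},w^{-1}]$ to fit its labeling convention $\lambda(x\lessdot y)=xy^{-1}$; you work directly in $[w',w]$ with the right-reflection labeling $x^{-1}y$, which is a cosmetic difference. Your observation that $[w',w'z']\subseteq w'W_K$ via monotonicity of the projection to $W^K$ is a clean way to see that all labels there lie in $T\cap W_K$.

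There is, however, a genuine gap in step~(ii), your downward-closure argument. Concatenating a reduced expression of $w'$ onto a reduced subword $a\le z'$ gives a subword relation between \emph{reduced} expressions of $w'a$ and $w'z'$ only when $\ell(w'z')=\ell(w')+\ell(z')$, i.e.\ when $w'\in W^K$; otherwise the concatenated word for $w'z'$ need not be reduced and the subword criterion does not apply. The same hypothesis is used silently in step~(i), where you turn a hypothetical cover $w'z'\lessdot w'z't$ with $t\in T\cap W_K$ into a cover $z'\lessdot z't$ in $W_K$ to contradict maximality of $z'$. The lemma's stated hypothesis $w'\le w$ does not guarantee $w'\in W^K$, and the statement is in fact false without it: for $W=S_3$, $K=I$, $w'=w=s_1s_2$, one computes $\{a\in W : s_1s_2\,a\le s_1s_2\}=\{e,\,s_2,\,s_2s_1,\,s_1s_2s_1\}$, which is not a lower Bruhat interval. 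This gap is inherited from the lemma's statement rather than introduced by you --- the paper's own proof asserts ``$[e,x]\subset Q$'' without justification and likewise tacitly needs $w'\in W^K$ --- and it is harmless where the lemma is used (Proposition~\ref{prop:reF} applies it to $(v',w')\in Q_K$, which forces $w'\in W^K$). But you should add $w'\in W^K$ to the hypotheses and invoke it explicitly at both places where the lengths need to add.
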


\begin{proof}
The proof is similar to the proof of Lemma~\ref{lem:interval}.  
In order to match the convention used in the edge labeling in \eqref{eq:EL}, we prove the equivalent statement that $Q=\{a \in W_K \vert a (w{'})^{-1} \le w^{-1}\} = [e, x]$ for some $x \in W_K$. 

It is clear that $e$ is the unique minimal element of $Q$. Let $x$ be any maximal element of $Q$. It is clear that $[e, x ] \subset Q$.

We fix a reflection order $\preceq$ on $W$ such that $t_1 \preceq t_2$, for any $t_1 \in T \cap W_K$, $t_2 \in T- W_K$. Recall that $[(w{'})^{-1},w^{-1}]$ is EL-shellable with respect to the reflection order $\preceq$ on $T$. 

We consider the unique increasing maximal chain in $[x(w{'})^{-1}, w^{-1}]$, denoted by \linebreak$c(x(w{'})^{-1},w^{-1}) =x(w{'})^{-1} \le x_1 \le \cdots \le w$. We must have $x(w{'})^{-1} x_1^{-1} \not \in W_K$ by the maximality of $x$.
Let $c((w{'})^{-1},x(w{'})^{-1}) = (w{'})^{-1} \le y_1 \le \cdots \le y_l \le x(w{'})^{-1}$ be the unique increasing maximal chain in $[(w{'})^{-1}, x(w{'})^{-1}]$. We then have $y_l w' x^{-1} \in W_K$, since we can construct the increasing maximal chain in $[e,x] \subset W_K$ by the labeling convention \eqref{eq:EL}.

Hence the combined chain 
\[
(w{'})^{-1} \le y_1 \le \cdots \le y_l \le x(w{'})^{-1} \le x_1 \le \cdots \le w^{-1}
\]
is increasing and maximal. By the definition of the EL-labeling on  $[(w{'})^{-1}, w^{-1}]$, this chain must be unique. Therefore the element $x$ is unique. 
\end{proof}

\begin{prop}\label{prop:reF}
Let $(v', w') \le (v, w)$ in $Q_K$. Let $z  = (v')^{-1} \circ_l v$ and $z'$ be the element defined in Lemma~\ref{lem:z'}. Set $N_R(v') = \{t \in T \vert v't<v'\}$. Then \begin{align*}
F^K_{(v',w'), (v,w)} & =\{(a,b) \in W_K \times W_K \vert   z  \le a \le b  \le z', v'a \circ_r b^{-1} = v'\}\\
&= \{(a,b) \in W_K \times W_K \vert z  \le a \le b  \le z', ta \not \le b \,\, \forall t \in N_{R}(v')\}\\
&= \{(a,b) \in W_K \times W_K \vert z  \le a \le b  \le z', ta \not \le b \,\, \forall t \in N_{R}(v')\text{ with }a \lessdot ta\}.
\end{align*}
\end{prop}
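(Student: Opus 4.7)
I will prove the three equalities in Proposition~\ref{prop:reF} in turn, each by translating the defining conditions. The main tools are Lemma~\ref{lem:z'}, the standard factorization lemma for Bruhat order combined with length-additive products, and the EL-shellability of Bruhat intervals.

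For the first equality, I match up the conditions in the original definition of $F^K_{(v',w'),(v,w)}$ with $z \le a \le b \le z'$. By Lemma~\ref{lem:z'} applied to $w' \le w$, the condition $w'b \le w$ is equivalent to $b \le z'$ (using $w' \in W^K$, $b \in W_K$, so $\ell(w'b) = \ell(w') + \ell(b)$ is automatic). The condition ``$v \le v'a$ with $\ell(v'a) = \ell(v') + \ell(a)$'' is equivalent to $z \le a$: by the standard factorization lemma, $v \le v'a$ with length-additivity iff $v = v_1' v_2$ with $v_1' \le v'$, $v_2 \le a$, and $\ell(v) = \ell(v_1') + \ell(v_2)$; the minimum (over $v_1' \le v'$) of the corresponding $v_2 = (v_1')^{-1}v$ is exactly $(v')^{-1} \circ_l v = z$ by definition of $\circ_l$, so such a decomposition with $v_2 \le a$ exists iff $z \le a$. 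The remaining relation $v'a \le w'b$ is then automatic from $v' \le w'$, $a \le b$, and the two length-additivities, via the subword chain $v'a \le w'a \le w'b$.

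For the second equality, I show $v'a \circ_r b^{-1} = v'$ is equivalent to ``$ta \not\le b$ for all $t \in N_R(v')$''. In the forward direction, if $t \in N_R(v')$ satisfies $ta \le b$, then $a^{-1}t \le b^{-1}$ and $v'a \cdot (a^{-1}t) = v't < v'$ witnesses $v'a \circ_r b^{-1} \le v't < v'$, a contradiction. For the converse, use the characterization $v'a \circ_r b^{-1} = v'a \cdot y^*$ where $y^* \le b^{-1}$ is the longest element with the full-cancellation property $\ell(v'a y^*) = \ell(v'a) - \ell(y^*)$. Since $y^* = a^{-1}$ is always a valid choice giving the value $v'$, we have $v'a \circ_r b^{-1} < v'$ iff $\ell(y^*) > \ell(a)$. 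A Bruhat-chain argument on the full-cancellation sequence then shows that $y^*$ admits a reduced word extending one for $a^{-1}$ by at least one further reflection $t$; the extension condition unpacks to $a^{-1}t \le b^{-1}$ and $\ell(v't) = \ell(v') - 1$, i.e., $t \in N_R(v')$ with $ta \le b$.

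For the third equality, the inclusion $\{$second set$\} \subseteq \{$third set$\}$ is trivial since the third set imposes fewer conditions. For the reverse, assume some $t \in N_R(v')$ has $ta \le b$; I derive a contradiction with the third-set hypothesis. The length-additivity $\ell(v'a) = \ell(v') + \ell(a)$ forces the disjoint decomposition $N_R(v'a) = N_R(a) \sqcup a^{-1} N_R(v')a$, hence $N_R(v') \cap N_L(a) = \emptyset$ and $ta > a$. If $a \lessdot ta$ we are immediately done. Otherwise $\ell(ta) - \ell(a) \ge 3$, and I construct a cover $a \lessdot t'a \le ta$ with $t' \in N_R(v')$ as follows: fix a reflection order $\preceq$ placing all reflections in $N_R(v')$ before those outside $N_R(v')$ (cf.~\cite{Dyer1}*{Proposition~2.3}) and consider the EL-shelling of $[a, ta]$ from \S\ref{subsec:ELorder}; the first edge of the unique increasing maximal chain of $[a,ta]$ is labeled by a reflection $t'$ minimal under $\preceq$ among cover-labels above $a$ inside $[a,ta]$, and one argues this minimum lies in $N_R(v')$ because the label $t$ (of some cover somewhere in the interval) already lies in $N_R(v')$. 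Then $t'a \le b$ contradicts the third-set hypothesis. The main obstacle is precisely this last step: verifying that the minimal-labeled cover in $[a,ta]$ lies in $N_R(v')$. The cleanest proof likely proceeds via Dyer's theory of reflection subgroups, which gives uniform control over the labels appearing in $[a, ta]$ and their compatibility with $N_R(v')$.
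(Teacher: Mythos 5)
Your proof of the first equality has a real flaw. You assert that ``$v \le v'a$ with $\ell(v'a) = \ell(v') + \ell(a)$'' is equivalent to $z \le a$. But \cite{HL}*{Lemma~4.3} gives the \emph{unconditional} equivalence $v \le v'a \iff z \le a$; it does not produce the length-additivity. That extra condition has to be extracted from $v'a \circ_r b^{-1} = v'$, and the paper does exactly this: since $a \le b$, one has $v' \ge v'a \circ_r a^{-1} \ge v'a \circ_r b^{-1} = v'$, hence $v'a \circ_r a^{-1} = v'$, and unwinding the defining property of $\circ_r$ (there is $a' \le a$ with $v' = v'a(a')^{-1}$ and $\ell(v'a) - \ell(a') = \ell(v')$, forcing $a' = a$) yields $\ell(v'a) = \ell(v') + \ell(a)$. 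Your ``standard factorization lemma'' argument characterizes $v \le v'a$ \emph{given} length-additivity; it does not establish it, so your translation of the defining conditions is circular at this point.

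More fundamentally, your strategy for the remaining equalities does not match the paper's, and it runs directly into the obstacle you yourself flag. The paper never proves the second and third equalities separately. It establishes the cycle $F^K = \text{Set}_1 \subset \text{Set}_2 \subset \text{Set}_3 \subset F^K$ (the middle inclusions are trivial), and the single hard inclusion $\text{Set}_3 \subset F^K$ produces a \emph{cover} $a \lessdot ta \le b$ with $t \in N_R(v')$ in one stroke. Concretely: fix a reduced word $b = s_{i_1}\cdots s_{i_n}$, compute $v'a \circ_r b^{-1}$ via the telescoping sequence $v_i = v_{i+1} \circ_r s_{i_i}$, fix a concatenated reduced word of $v'a$ (with the $v'$-letters in front), and locate the first step $j$ at which a letter of the $v'$-part is deleted. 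The deleted-letter bookkeeping hands you $t := a_j s_{i_j} a_j^{-1} \in N_R(v')$; a short length computation shows $a \lessdot ta$; and $ta = a_j s_{i_j}(a_j^{-1}a) \le s_{i_1}\cdots s_{i_n} = b$. Because the cover appears directly, there is no separate ``upgrade $a < ta$ to $a \lessdot t'a$'' step. That upgrade is precisely the step your approach needs but cannot close: your EL-shelling idea for $[a, ta]$ requires the existence of an atom $c$ of $[a,ta]$ with $ac^{-1} \in N_R(v')$, and this is not automatic from $a(ta)^{-1} = t \in N_R(v')$ (the covering labels of $[a,ta]$ need not contain a representative of $N_R(v')$ just because the ``endpoint reflection'' does). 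Your Bruhat-chain sketch for the converse of the second equality has the same shape of gap: the element realizing $v'a \circ_r b^{-1}$ need not admit a reduced word extending one of $a^{-1}$, so the ``extension by one further reflection'' is unjustified. I recommend abandoning the equality-by-equality plan and following the paper's cyclic-inclusion structure, where the single reduced-expression argument simultaneously yields the reflection, the cover relation, and the Bruhat comparison with $b$.
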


\begin{proof}
Note that $z \in W_K$ by the definition of the partial ordering on $Q_K$. 
By \cite{HL}*{Lemma~4.3}, we have $v \le v' a$ if and only if $(v')^{-1} \circ_l v \le a$ and $w'b \le w$ if and only if $b \le (w')^{-1} \circ_l w$. Moreover, if $a \le b$ and $v'a \circ_r b^{-1} = v'$, then $v' \ge v'a \circ_r a^{-1} \ge v'a \circ_r b^{-1}= v'$. Therefore $v'a \circ_r a^{-1}=v'$. By definition, there exists $a' \le a$ such that $v'=v' a (a') \i$ and $\ell(v'a)-\ell(a')=\ell(v')$. Hence $a'=a$ and $\ell(v' a)=\ell(v')+\ell(a)$. Therefore we conclude that $$F^K_{(v',w'), (v,w)} = \{(a,b) \in W_K \times W_K \vert   z  \le a \le b  \le z'; v'a \circ_r b^{-1} = v'\}.$$

Note that if $ta  \le b$ for some $t \in N_R(v')$, then $v'a \circ_r b^{-1} \le v'aa^{-1}t = v't < v'$. Thus we have 
$$
F^K_{(v',w'), (v,w)} \subset  \{(a,b) \in W_K \times W_K \vert z  \le a \le b  \le z', ta \not \le b \,\, \forall t \in N_{R}(v')\}.
$$
It remains to prove that if $(a,b) \in W_K \times W_K$ with $z \le a \le b  \le z'$ and $ ta \not \le b$ for all $t \in N_{R}(v')$ with $a \lessdot ta$, we have $F^K_{(v',w'), (v,w)}$.

Assume the contrary that $v'a \circ_r b^{-1}< v'$. Let $b = s_{i_1} \cdots s_{i_n}$ be a reduced expression of $b$. We write $v_n = v'a \circ_r s_{i_n}$ and $v_{i} =v_{i+1} \circ_r s_{i_{i}}$ inductively. Fix a reduced expression $v'a = s_{k_1} \cdots s_{k_l} s_{k_{l+1}} \cdots s_{k_{l+l'}}$, such that $v' =s_{k_1} \cdots s_{k_l}$ and $a= s_{k_{l+1}} \cdots s_{k_{l+l'}}$. Then by the deletion property of Coxeter groups, we have either  $v_{i} =v_{i+1}$ or $v_{i}$ is a subexpression of  $v_{i+1}$ by deleting a unique simple reflection in the (fixed) reduced expression. The resulting expression is always reduced by the definition of $- \circ_r s_{i_{i}}$. Since $a \le b$, the resulting expression of $v_1 = v'a \circ_r b^{-1}$ must be a subexpression of $v'$. By the assumption, $v'a \circ_r b^{-1}< v'$. Suppose that $v_{j-1}$ is the first element obtained from $v_j$ by deleting a simple reflection in $s_{k_1} \cdots s_{k_l}$. In other words, $j$ is the largest integer such that $v_{j} = v' a_{j}$ with $a_{j }  \le a$ and $v_{j} \circ_r s_{i_{j}} = v'_{j} a_{j}$ for $v'_{j} \lessdot v'$. 
 
By the definition of $\circ_r$, we have $\ell(a) = \ell(a_{j}) + \ell(a_{j}^{-1} a)$. We also have that $v'_j a_j=v' a_j s_{i_j}=v' (a_j s_{i_j} a_j \i) a_j$ and $v'_j=v' (a_j s_{i_j} a_j \i)$. Thus $a_{j} s_{i_j} a_{j}^{-1} \in N_{R}(v')$. Note that $a_{j} s_{i_j} a_{j}^{-1}$ is a reflection and we have either $a<a_{j} s_{i_j} a_{j}^{-1} a$ or $a>a_{j} s_{i_j} a_{j}^{-1} a$. If $a>a_{j} s_{i_j} a_{j}^{-1} a$, then $v' a=(v' a_{j} s_{i_j} a_{j}^{-1}) (a_{j} s_{i_j} a_{j}^{-1} a)$ and $\ell(v')+\ell(a)=\ell(v' a) \le \ell(v' a_{j} s_{i_j} a_{j}^{-1})+\ell(a_{j} s_{i_j} a_{j}^{-1} a)<\ell(v')+\ell(a)$. That is a contradiction. Hence $a<a_{j} s_{i_j} a_{j}^{-1} a=a_j s_{i_j} (a_j \i a)$. We have $\ell(a)<\ell(a_j s_{i_j} (a_j \i a)) \le \ell(a_j)+1+\ell(a_j \i a)=\ell(a)+1$. Hence we conclude that $\ell(a_j s_{i_j} (a_j \i a))=\ell(a)+1$ and $a \lessdot a_{j} s_{i_j} (a_{j}^{-1} a)$. 

Since $v' a \circ_r b \i=v' a_j \circ_r (s_{i_1} \cdots s_{i_j}) \i <v'$. By definition, we have $a_j< s_{i_1} \cdots s_{i_j}$. Also $a_j s_{i_j}>a_j$. Thus $a_j s_{i_j} \le s_{i_1} \cdots s_{i_j}$. So $a_j s_{i_j} (a_j \i a) \le s_{i_1} \cdots s_{i_j} s_{i_{j+1}} \cdots s_{i_n}=b$. This contradicts the assumption that $ta \not \le b$ for all $t \in N_{R}(v')$ with $a \lessdot ta$. Hence $v'a \circ_r b^{-1}=v'$. 

The proposition is proved. 
\end{proof}

\begin{rem}
Thanks to the reformulation in Proposition~\ref{prop:reF}, one may regard the poset $F^K_{(v',w'), (v,w)}$ as a generalization of $Z_{\emptyset,J'}$ considered in \eqref{eq:ZJJ'}.
\end{rem}

\begin{cor}\label{lem:fbclosed}
Let $(a,b) \in F^K_{(v',w'), (v,w)} $. Then for any $(a',b') \in W_K \times W_K$ with $a \le a' \le b' \le b$, we have $(a',b') \in F^K_{(v',w'), (v,w)}$.
\end{cor}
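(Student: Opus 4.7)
The plan is to leverage the second reformulation from Proposition~\ref{prop:reF}, under which membership in $F^K_{(v',w'),(v,w)}$ is characterized by $z\le a\le b\le z'$ together with $ta\not\le b$ for every $t\in N_R(v')$. The bound inequalities for $(a',b')$ follow immediately from $z\le a\le a'\le b'\le b\le z'$, so the content reduces to verifying $ta'\not\le b'$ for every $t\in N_R(v')$.

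My strategy is to argue by contradiction: assume some $t\in N_R(v')$ satisfies $ta'\le b'$, and deduce that the same reflection $t$ satisfies $ta\le b$, contradicting $(a,b)\in F^K_{(v',w'),(v,w)}$. The key tool is the standard lifting property of the Bruhat order for general reflections: whenever $u\le v$ in $W$ and $t\in T$ satisfies $tu>u$, one has $tu\le v$ if $tv<v$, and $tu\le tv$ if $tv>v$. With this in place, a three-way case split handles every situation:
\begin{itemize}
\item If $ta\le a$, then $ta\le a\le b$ directly.
\item If $ta>a$ and $ta'>a'$, then $a\le a'<ta'$ together with $t(ta')=a'<ta'$ yields $ta\le ta'\le b'\le b$ by the lifting property (applied to $u=a$, $v=ta'$).
\item If $ta>a$ and $ta'<a'$, then $a\le a'$ together with $ta'<a'$ yields $ta\le a'\le b'\le b$ by the lifting property (applied to $u=a$, $v=a'$).
\end{itemize}

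I do not foresee any serious obstacle. The only care required is to correctly match the hypotheses of the lifting property in each subcase, and the observation that the offending reflection $t\in N_R(v')$ can be used unchanged when transferring between the pairs $(a',b')$ and $(a,b)$ makes the argument very clean.
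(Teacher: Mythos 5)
The argument hinges on a ``standard lifting property of the Bruhat order for general reflections,'' namely that for $u\le v$ and $t\in T$ with $tu>u$, one has $tu\le v$ when $tv<v$ and $tu\le tv$ when $tv>v$. This property holds when $t$ is a \emph{simple} reflection, but it is false for an arbitrary $t\in T$, and $N_R(v')$ does in general contain non-simple reflections. A concrete counterexample: in $W=S_3$ with simple reflections $s_1,s_2$, take $t=s_1s_2s_1\in T$, $u=e$, $v=s_1s_2$. Then $u\le v$, $tu=s_1s_2s_1>u$, and $tv=s_1<v$, yet $\ell(tu)=3>2=\ell(v)$, so $tu\not\le v$. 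Similarly, with $u=e$, $v=s_2$, $t=s_1s_2s_1$ one has $tu>u$, $tv=s_2s_1>v$, but $tu=s_1s_2s_1\not\le s_2s_1=tv$. Thus both implications you invoke in Cases 2 and 3 fail, and the contradiction you seek is not obtained; the proposal has a genuine gap.

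For comparison, the paper's proof avoids the Bruhat-order characterization entirely and works with the first reformulation in Proposition~\ref{prop:reF}: it shows directly that $v'a'\circ_r(b')^{-1}=v'$. Using the monotonicity of $\ast$ and $\circ_r$ from \cite{He}*{Lemma~2}, it produces the chain
\[
v'=v'a\circ_r b^{-1}\le(v'\ast a')\circ_r(b')^{-1}\le(v'\ast a')\circ_r(a')^{-1}\le v',
\]
forces equality throughout, deduces $v'\ast a'=v'a'$ from $(v'\ast a')\circ_r(a')^{-1}=v'$, and concludes $(v'a')\circ_r(b')^{-1}=v'$. If you wish to use the $N_R(v')$ characterization, you would need a substitute for the false lifting step; as it stands, the algebraic route through $\circ_r$ and $\ast$ is what makes the proof go through.
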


\begin{proof}
In the case where $W$ is the Weyl group of a Kac-Moody group, this statement follows from Proposition~\ref{prop:BPfiber} and the closeness of the fibers of $\pi_{v, w}$. We give a combinatorial proof here, which applies to arbitrary Coxeter groups.


By Proposition~\ref{prop:reF}, it suffices to show that $v' a' \circ_r (b') \i=v'$ for any $a \le a' \le b' \le b$.

Since $a' \le b'$, by definition we have $v'a' \circ_r (b')^{-1} \le v'$. We prove the other direction. By \cite{He}*{Lemma~2}, we have $$v' =v'a \circ_r (b)^{-1} \le (v' \ast a') \circ_r (b')^{-1} \le (v' \ast a') \circ_r (a')^{-1} \le  v'.$$ In particular, we have $(v' \ast a') \circ_r (b')^{-1}=(v' \ast a') \circ_r (a')^{-1}=v'$. We deduce from $(v' \ast a') \circ_r (a')^{-1}=v'$ that $v' a'=v' \ast a'$. Thus $(v' a') \circ_r (b')^{-1}=v'$. The statement is proved. 
\end{proof}

\subsection{Proof of Theorem \ref{thm:FK}}\label{sec:FK}
By \cite{Dyer1}*{Lemma~2.7}, there exists an reflection order $\preceq$ on $T$ such that 
 \begin{equation}\label{eq:refv'}
   t_1 \preceq t_2 \text{ for any } t_1 \in N_R(v') \text{ and } t_2 \in T - N_R(v').
  \end{equation}
Let $M = M_{\preceq}([a, z'])$ be the acyclic complete matching constructed in \S\ref{subsec:M}. We show $M$ gives the desired matching on $F^K_{(v',w'), (v,w)}$. The strategy of the proof is similar to Theorem~\ref{thm:ZJJ'} thanks to the reformulation of the poset $F^K_{(v',w'), (v,w)}$ in Proposition~\ref{prop:reF}. However, we need a detailed analysis on the generalized quotients of intervals, which we will discuss below. 

Let $z, z'$ be as in Proposition~\ref{prop:reF}. Let $\{v'\}\backslash[z,z']=  \{a \in [z,z'] \vert \ell(v'a) = \ell(v') + \ell(a) \}$ be the (left) generalised quotient of the interval $[z,z']$ in the sense of \cite{BW}.\footnote{Note that we consider the left quotient here, while the right quotient was considered in \cite{BW}. There is clearly no essential difference.} Note that $a \in \{v'\}\backslash[z,z']$ if and only if $(a,a) \in F^K_{(v',w'), (v,w)}$.  

Let $a,a' \in \{v'\}\backslash[z,z']$. Let $u$ be a minimal upper bound of $a,a'$ in $[z,z']$ (hence also in $W_K$). Then by \cite{BW}*{Theorem~3.7}, we see that $\ell(v'u) = \ell(v') +\ell(u)$. Since the interval $[z,z']$ is finite, we obtain a unique maximal element in $\{v'\}\backslash[z,z']$ by iterating the process. We denote this element by $\tilde z$. We define 
\begin{align*} P_a &= \{ b \in W_K \vert a \le b \le z', v'a \circ_r b^{-1} = v'\} \\ &= \{b \in W_K \vert a \le b \le z', ta \not \le b\,\, \forall t \in N_R(v')\}.\end{align*} 

{\it (a)  We have $P_a = \{a\}$ if and only if $a = \tilde{z}$.}

It is easy to see that $P_{\tilde{z}}=\{\tilde z\}$. Suppose that $a<\tilde{z}$. By \cite{BW}*{Theorem~3.4}, the generalized quotient $\{v'\}\backslash[z, z']$ is a graded poset with grading given by the length function of $W$. Let $a' \in \{v'\}\backslash[z, z']$ be such that $a \lessdot a' \le \tilde{z}$. 

We show $v'a \circ_r (a')^{-1} = v'$. Assume the contrary that $v'a \circ_r (a')^{-1} = u < v'$. Since $\ell(u) \ge \ell(v'a) - \ell(a') = \ell(v) -1$, we have $v'a (a')^{-1} = u=v't$ for some $t \in N_{R}(v')$. Therefore $ta = a'$ and $\ell (v'a') = \ell(v'ta) < \ell(v') + \ell(a)$. This contradicts the fact that $a' \in \{v'\}\backslash[z, z']$. Thus $P_a \supset \{a, a'\}$. (a) is proved. 

 {\it (b) The poset $P_a = \{ b \in W_K \vert a \le b \le z',ta \not \le b\,\, \forall t \in N_R(v') \}$ has a complete acyclic matching for any $a \in \{v'\}\backslash[z,z']$ unless $a = \tilde{z}$.}

Assume $a < \tilde{z} \le z'$. Let $a_1, a_2, \dots, a_s$ be all the atoms of $[a,z']$ such that $a_i a^{-1} \in N_R(v')$. Then, similarly to \S\ref{sec:M-P}, we see that $[a,z'] - \cup_{1 \le i \le s}[a_i,z']$ is an $M$-subset due to the choice of the reflection order $\preceq$ in \eqref{eq:refv'}. Recall if $b \in W_K$ with $v'a \circ_r b^{-1} <v'$, then $a \lessdot ta \le b$ for some $t \in N_{R}(v')$ by Proposition~\ref{prop:reF}. So $b \in \cup_{1 \le i \le s}[a_i,z']$. Therefore $P_a = [a,z'] - \cup_{1 \le i \le s}[a_i,z']$ has a complete acyclic matching unless $a=\tilde z$.

The rest of the proof is entirely similar to that of Theorem~\ref{thm:ZJJ'}.

\subsection{Regularity theorem for partial flag varieties} \label{sec:regular}

The regularity theorem for totally nonnegative partial flag varieties was first established for reductive groups in \cite{GKL} and later for arbitrary Kac-Moody groups in \cite{BH22}. In both cases, the proofs involve the compatibility of the totally positivity on partial flag varieties with certain atlas maps, which are rather difficult to establish; see, for example, \cite{BH22}*{\S6-\S8}. On the other hand, the regularity theorem for totally nonnegative full flag varieties is dramatically easier to establish based on the product structure introduced in \cite{BH22}*{\S5.1}.

In this section, we give a new proof of the regularity theorem for totally nonnegative partial flag varieties, assuming the regularity theorem for the full flag varieties.  We refer to \cite{DHM}*{\S2.1} for the definition of CW complexes.

We first recall the following topological result.
\begin{prop}\label{lem:cellquotient}\cite{DHM}*{Corollary~2.33}
Let $B^n$ be a  $n$-dimensional closed ball and $S^{n-1}$ be the boundary sphere of $B^n$. Let $\sim$ be an equivalent relation on the close ball $B^n$ such that 
\begin{enumerate}
    \item all equivalently classes are contractible ;
    \item $S^{n-1}/ \sim$ is homeomorphisc to $S^{n-1}$;
    \item if $x \sim y$ with $x \in S^{n-1}$, then $y \in S^{n-1}$;
    \item if $x \sim y$ with $ x \not \in S^{n-1}$, then $y = x$.
\end{enumerate}
Then $B^n$ is homeomorphic to $B^n / \sim$.
\end{prop}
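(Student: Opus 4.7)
The plan is to recognize the quotient map $q \colon B^n \to B^n/\sim$ as a cell-like surjection between compact topological $n$-manifolds with boundary, and then invoke Siebenmann's cell-like approximation theorem to conclude that $q$ is uniformly approximable by homeomorphisms, whence $B^n \cong B^n/\sim$. I would carry this out in three steps.

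First, I would verify that $q$ is a cell-like map. By condition (4) the fibers over interior points of $B^n/\sim$ are singletons; by (1) the remaining fibers (living in $S^{n-1}$ by (3)) are contractible compacta. Each such fiber sits as a compact contractible set inside the ANR $B^n$, so it has trivial shape. Since $B^n$ is compact, $q$ is proper, so $q$ is a genuine cell-like surjection.

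Second, and most delicately, I would establish that $B^n/\sim$ is a topological $n$-manifold with boundary. Condition (4) makes the interior of $B^n$ embed as an open subset of $B^n/\sim$ carrying its original Euclidean structure, and conditions (2)--(3) identify the complementary set with $S^{n-1}$. What needs work is producing a collar of this boundary sphere inside the quotient. I would take a closed collar $N \cong S^{n-1} \times [0,1]$ of $\partial B^n$ in $B^n$, observe that its image $q(N)$ is canonically the mapping cylinder of the cell-like surjection $S^{n-1} \twoheadrightarrow S^{n-1}/\sim \cong S^{n-1}$, and use the fact that the mapping cylinder of a cell-like map between closed manifolds is itself a manifold collared over its target end. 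The disjoint disks property near a boundary point can be checked by first approximating the boundary cell-like map $S^{n-1} \to S^{n-1}/\sim$ by homeomorphisms (one dimension down) and then pushing a pair of disks off the critical level in the collar.

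Third, once source and target are both identified as compact topological $n$-manifolds with boundary and $q$ is a cell-like map between them, Siebenmann's cell-like approximation theorem (in dimensions $n \ge 5$, with the classical low-dimensional counterparts due to Moise in dimensions $\le 3$ and Freedman in dimension $4$ handling the remaining cases) produces the desired homeomorphism $B^n \cong B^n/\sim$. The principal obstacle is the manifold recognition in the second step: condition (2) only guarantees that the quotient of the boundary sphere is again a sphere, not that the quotient sphere sits collaringly in $B^n/\sim$, and bridging this gap requires a genuine geometric-topological input (essentially an application of the cell-like approximation theorem one dimension down) rather than a formal manipulation of the four hypotheses.
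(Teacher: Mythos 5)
The paper itself supplies no proof of this proposition: it is quoted verbatim with the citation \cite{DHM}*{Corollary~2.33}, so there is nothing internal to compare against. Your reconstruction does, however, appear to recover the argument that the cited source relies on, and the three-step skeleton (cell-likeness of $q$, manifold recognition for $B^n/\!\sim$, CE approximation) is sound. Two remarks are worth making.

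First, the attributions for the low-dimensional cell-like approximation theorem are off. The dimension-$3$ case is due to Armentrout, and the dimension-$4$ case is due to Quinn; Moise's theorems concern triangulation and smoothing of $3$-manifolds, and Freedman's work, while foundational in dimension $4$, is not what is invoked here. Since in your argument the approximation theorem is applied both to $f \colon S^{n-1} \to S^{n-1}/\!\sim$ (dimension $n-1$) and to $q$ itself (dimension $n$), you need the theorem across a range of dimensions, and the correct low-dimensional references matter.

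Second, your second and third steps can be merged into something more economical that avoids manifold recognition and the disjoint-disks discussion altogether. Take a collar $S^{n-1}\times[0,1]$ of $\partial B^n$ and write $B^n = B_0 \cup (S^{n-1}\times[0,1])$ with $B_0$ a smaller closed ball. Conditions (3) and (4) say $\sim$ is trivial off $S^{n-1}\times\{0\}$, so $B^n/\!\sim\; = B_0 \cup M_f$ where $M_f$ is the mapping cylinder of the cell-like surjection $f\colon S^{n-1}\to S^{n-1}/\!\sim\;\cong S^{n-1}$. By CE approximation in dimension $n-1$, $f$ is a near-homeomorphism, and the mapping cylinder of a near-homeomorphism between compacta is homeomorphic to a product cylinder rel its domain end (a Bing-shrinking argument going back to M.~Brown). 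Hence $M_f \cong S^{n-1}\times[0,1]$ rel $S^{n-1}\times\{1\}$, and $B^n/\!\sim\;\cong B_0 \cup (S^{n-1}\times[0,1]) \cong B^n$. This uses the deep input only once, one dimension down, and sidesteps the need to establish that $B^n/\!\sim$ is a manifold with boundary before applying Siebenmann's theorem in dimension~$n$.

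One small point you should make explicit: condition (1) asserts that the equivalence classes are contractible, but cell-likeness also needs them compact. This is automatic here because the only nondegenerate classes lie in $S^{n-1}$ and, by condition (2), are the point-preimages of a continuous map to a Hausdorff space, hence closed in the compact set $S^{n-1}$.
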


Let $Q = \{(v,w) \vert v \le w \text{ in } W\}$. We equip $Q$ with a partial order $\le$ such that $(a,b) \le (v,w)$ if and only if $v \le a \le b \le w$ in the Bruhat order of $W$.

Recall \S\ref{sec:fl} that $\CB_{v,w, \ge 0}$ is a regular CW complex homeomorphic to a closed ball $B^{\ell(w) - \ell(v)}$ for any $(v,w) \in Q$.  By \cite{BH22}*{Theorem~5.2}, the boundary sphere of $ B^n \cong \CB_{v,w, \ge 0}$ is precisely $S^{n-1} \cong \sqcup_{(v',w') < (v,w)} \CB_{v',w', >0}$.
 We explain how to deduce that $\CP_{K,v,w, \ge 0}$ is a regular CW complex homeomorphic to a closed ball. 

The natural projection $\pi_{v,w}: \CB_{v,w, \ge 0} \rightarrow \CP_{K, v,w, \ge 0}$ restricts to a homeomorphism $ \CB_{v,w, > 0} \cong \CP_{K, v,w, > 0}$ for any $(v,w) \in Q_K$. Moreover, $\pi_{v,w}$ maps $S^{\ell(w) - \ell(v) -1} \cong \sqcup_{(a,b)< (v,w) \in  Q}\CB_{a,b, > 0}$ to $\sqcup_{(v',w') < (v,w) \in Q_K }\CP_{K, v',w', > 0}$. 

 Since the choice of $(v,w) \in Q_K$ is arbitrary, this shows that $\CP_{K, v,w, \ge 0}$ is a CW complex with cells $\CP_{K, v',w', > 0}$ for $(v',w') \le (v,w) \in Q_K$ with characteristic maps $\pi_{v',w'}: \CB_{v',w', \ge0} \rightarrow \CP_{K, v',w', \ge 0}$.

It remains to show that $\CP_{K, v',w', \ge 0}$ is homeomorphic to a closed ball for any $(v',w') \in Q_K$. Without loss of generality, we show that $\CP_{K, v,w, \ge 0}$ is homeomorphic to a closed ball by induction on $n = \ell(w) - \ell(v)$. The base case $n = 0$ is trivial. 

The projection $\pi_{v,w}$ factors through the natural quotient
        \[
        \begin{tikzcd}
             \CB_{v,w, \ge 0} \ar[r, "\pi_{v,w}"] \ar[d] & \CP_{K, v,w, \ge 0} \\
             \CB_{v,w, \ge 0} / \sim \ar[ru, "\tilde{\pi}_{v,w}" below right]&
        \end{tikzcd}
        \]
        Since $\pi_{v,w}$ is proper, hence close, $\tilde{\pi}_{v,w}$ is also a closed map. Then we see that $\tilde{\pi}_{v,w}$ is a homeomorphism, since it is a bijective continuous closed map. 

    We next show $(\CB_{v,w, \ge 0} / \sim )
            \cong \CB_{v,w, \ge 0} \cong B^{\ell(w) - \ell(v)}$ following Lemma~\ref{lem:cellquotient} and the induction hypothesis. 
             
            The equivalent classes of $\sim$ are precisely the fibers of $\pi_{v,w}$, whose contractibility follows from Theorem~\ref{thm:BPfiber}. This verifies the condition (1) in Proposition~\ref{lem:cellquotient}.  
            
           The boundary sphere of $ B^n \cong \CB_{v,w, \ge 0}$ is precisely $
            S^{n-1} \cong \sqcup_{(v',w') < (v,w)} \CB_{v',w', >0}$ by \cite{BH22}*{Theorem~5.2}. 
            Therefore, we have 
            \[
            (S^{n-1} / \sim) \cong \pi_{v,w}(S^{n-1}) =  \bigsqcup_{(v',w') < (v,w) \in Q_K} \CP_{K, v',w', >0}.
            \]
            By the induction hypothesis, each closure $\overline{\CP_{K, v',w', >0}} = \CP_{K, v',w', \ge0}$ is a regular CW complex homeomorphic to a closed ball. Hence $\sqcup_{(v',w') < (v,w) \in Q_K} \CP_{K, v',w', >0}$ is a regular CW complex. We then see that it is a sphere of dimension $n-1$ by the combinatorics of the face poset following \cite{Bj2}. Hence $( S^{n-1}/ \sim) \cong  S^{n-1}$ verifying condition (2) in Proposition~\ref{lem:cellquotient}.
            
           Since $\pi_{v,w}: \CB_{v,w, \ge 0}  \rightarrow \CP_{K,v,w, \ge 0} $ is the characteristic map of a CW complex. Both conditions (3) and (4) in Proposition~\ref{lem:cellquotient} follow immediately.

           We have now verified all the conditions in Proposition~\ref{lem:cellquotient}. We conclude that $\CP_{v,w, \ge 0} $ is a regular CW complex homeomorphic to $B^{\ell(w) - \ell(v)}$.


\end{document}